\theoremstyle{definition}
\newtheorem{theorem}{Theorem}[section]
\newtheorem{prop}[theorem]{Proposition}
\newtheorem{defn}[theorem]{Definition}
\newtheorem{cor}[theorem]{Corollary}
\newtheorem{eg}[theorem]{Example}
\newtheorem{lemma}[theorem]{Lemma}
\newtheorem{remark}[theorem]{Remark}
\numberwithin{equation}{section}
\DeclareMathOperator{\Ima}{Im}
\DeclareMathOperator{\red}{red}
\newcommand{\Z}{\mathbb{Z}}
\newcommand{\R}{\mathbb{R}}
\newcommand{\F}{\mathbb{F}}
\begin{document}

\title{Computational Tools in Weighted Persistent Homology}

\author{Shiquan Ren}
\address{Yau Mathematical Sciences Center, Tsinghua University, Beijing 100084, China}
\email{sren@u.nus.edu}
\thanks{The project was supported in part by the Singapore Ministry of Education research grant (AcRF Tier 1 WBS No.~R-146-000-222-112). The first author was supported in part by the National Research Foundation, Prime Minister's Office, Singapore under its Campus for Research Excellence and Technological Enterprise (CREATE) programme. The second author was supported in part by the President's Graduate Fellowship of National University of Singapore. The third author was supported by a grant (No.~11329101) of NSFC of China.}

\author{Chengyuan Wu$^*$}
\address{Department of Mathematics, National University of Singapore, Singapore 119076}
\email{wuchengyuan@u.nus.edu}
\thanks{$^*$Corresponding author}
%    \thanks will become a 1st page footnote.
%\thanks{The first author was supported in part by NSF Grant \#000000.}

\author{Jie Wu}
\address{School of Mathematics and Information Science, Hebei Normal University, Hebei 050024, China; and Department of Mathematics, National University of Singapore, Singapore 119076}
\email{matwuj@nus.edu.sg}
%\thanks{Support information for the second author.}

%    General info
\subjclass[2010]{Primary 55N35, 55T99; Secondary 55U20, 55U10}

%\date{January 1, 2001 and, in revised form, June 22, 2001.}

%\dedicatory{}

\keywords{Algebraic topology, Persistent homology, Weighted persistent homology, Bockstein spectral sequence}

\begin{abstract}
In this paper, we study further properties and applications of weighted homology and persistent homology. We introduce the Mayer-Vietoris sequence and generalized Bockstein spectral sequence for weighted homology. For applications, we show an algorithm to construct a filtration of weighted simplicial complexes from a weighted network. We also prove a theorem that allows us to calculate the mod $p^2$ weighted persistent homology given some information on the mod $p$ weighted persistent homology.
\end{abstract}

\maketitle
%%%%%Main Text Start
\section{Introduction}
Persistent homology is a recent branch of applied algebraic topology that has applications in data analysis \cite{bubenik2015statistical}, image processing and recognition \cite{Carlsson2008,adcock2016ring}, and more \cite{bendich2016persistent,dewoskin2010applications}. It is also a subject of active research, from both the computational \cite{dlotko2014simplification} and theoretical \cite{bubenik2007statistical,bubenik2014categorification} points of view.

The purpose of this paper is to reformulate some classical computational tools in homology theory in the context of weighted persistent homology. An important computational tool in homology theory is the Mayer-Vietoris sequence which can largely shorten the computations by handling well the subcomplexes. The Bockstein spectral sequence is a classical tool for recovering the integral homology from mod $p$ homology. In the situation of weighted persistent homology, it is convenient to consider the chains with coefficients in $\Z/2\Z$ (see \cite{Zomorodian2005,edelsbrunner2012persistent,boissonnat2014computing}). It is important to explore the Bockstein spectral sequence on weighted persistent homology as a tool to recover the integral weighted persistent homology so that one can obtain more topological information on weighted data.

In this paper, we study further properties and applications of weighted homology and persistent homology. Weighted simplicial homology \cite{Dawson1990,ren2018weighted} is a generalization of simplicial homology, that reduces to the usual simplicial homology when all the simplices have the same nonzero weight. For weighted simplicial complexes, we allow weights in a commutative ring $R$ with unity. When considering weighted homology, we require the ring $R$ to be an integral domain. In \cite{ren2018weighted}, it is shown that weighted persistent homology can tell apart filtrations that ordinary persistent homology does not distinguish. For example, if there is a point considered as special, weighted persistent homology can tell when a cycle containing the point is formed or has disappeared. Hence, weighted persistent homology is a richer invariant than persistent homology.

Our approach to weighted persistent homology is to weight the boundary map. There are also various other approaches to adding weight to persistent homology \cite{bell2017weighted,Petri2013,buchet2016efficient,edelsbrunner2012persistent}.

The Mayer-Vietoris sequence is an important tool in algebraic topology to study the homology of a space. In Section \ref{sec:mayer}, we state and verify the Mayer-Vietoris sequence for weighted homology.

In Section \ref{sec:filt}, we show that a descending chain of ideals gives rise to a filtration of weighted simplicial complexes. An application is an algorithm (Subsection \ref{subsec:filt}) to construct a filtration of weighted simplicial complexes from a weighted network (weighted graph). This is related to the concept of Weight Rank Clique filtration \cite{Petri2013} which is used in the study of complex networks \cite{boccaletti2006complex,strogatz2001exploring,albert2002statistical,Petri2013}. A key feature is that in the process, we only construct the clique complex once, as opposed to the Weight Rank Clique filtration where multiple constructions of the clique complex is needed. In Subsection \ref{subsec:stanley}, we also illustrate an application in relation to Stanley-Reisner theory where a filtration could be set up so as to gather new information about the weighted simplicial complexes.

Next, we prove that over a field $\mathbb{F}$, the weighted homology groups $H_n(K,w;\mathbb{F})$ are isomorphic to the usual unweighted homology groups $H_n(K;\mathbb{F})$. This is described in greater detail in Section \ref{sec:field}.

In Section \ref{sec:bockstein}, we develop the Bockstein spectral sequence for weighted homology. The motivation behind using the Bockstein spectral sequence is that in persistent homology algorithms \cite{Zomorodian2005,boissonnat2014computing}, the homology is usually computed with field coefficients. However, the integral homology groups contain more information than the homology groups with field coefficients. The Bockstein spectral sequence allows us to ``unravel'' the integral homology from the mod $p$ homology. In the process, we prove a theorem (Theorem \ref{cor:ptop2}) that allow us to calculate the mod $p^2$ weighted persistent homology provided some conditions on the mod $p$ persistent homology are satisfied.

The final part of this paper (Section \ref{sec:genbockstein}) is about the generalized Bockstein spectral sequence, where we consider coefficients in an integral domain $R$. A potential application is in algebraic geometry where recently there has been some interest in the usage of weighted simplicial complexes \cite{doran2016simplicial,gonzalez2017balanced} with weights in a ring $R$.

\section{Background}
In this section, we review the background necessary for the subsequent sections. We begin by reviewing weighted simplicial homology \cite{Dawson1990,ren2018weighted}, and then weighted persistent homology \cite{ren2018weighted}. 
%We then list other definitions and theorems from algebra that we need, like the zig-zag lemma.
\subsection{Weighted Simplicial Homology}
Weighted simplicial homology \cite{Dawson1990,ren2018weighted} is a generalization of simplicial homology. Every simplex has a weight in a ring $R$, and the boundary map is weighted accordingly. When all the simplices have the same weight $a\in R\setminus\{0\}$, the resulting weighted homology is the same as the usual simplicial homology. We list some of the key definitions and results below.
\begin{defn}[{\cite[p.~2666]{ren2018weighted}}]
\label{divcond}
A weighted simplicial complex (or WSC for short) is a pair $(K,w)$ consisting of a simplicial complex $K$ and a weight function $w: K\to R$, where $R$ is a commutative ring, such that for any $\sigma_1,\sigma_2\in K$ with $\sigma_1\subseteq\sigma_2$, we have $w(\sigma_1)\mid w(\sigma_2)$.
\end{defn}
\begin{theorem}[{\cite[p.~2668]{ren2018weighted}}]
\label{idealcomplex}
Let $I$ be an ideal of a commutative ring $R$. Let $(K,w)$ be a weighted simplicial complex, where $w:K\to R$ is a weight function. Then $K\setminus w^{-1}(I)$ is a simplicial subcomplex of $K$. 
\qed
\end{theorem}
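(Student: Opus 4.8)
The plan is to verify directly that $L := K\setminus w^{-1}(I)$ is closed under passing to faces, which is exactly the defining property of a simplicial subcomplex. Unwinding definitions, $w^{-1}(I) = \{\sigma\in K : w(\sigma)\in I\}$, so $L = \{\sigma\in K : w(\sigma)\notin I\}$, and what I must show is: whenever $\sigma\in K$ satisfies $w(\sigma)\notin I$ and $\tau\subseteq\sigma$, then $w(\tau)\notin I$ as well. Equivalently, it suffices to show $w^{-1}(I)$ is upward closed in $K$, i.e.\ $\sigma\in w^{-1}(I)$ and $\sigma\subseteq\sigma'$ imply $\sigma'\in w^{-1}(I)$.

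First I would fix a pair $\tau\subseteq\sigma$ of simplices of $K$. The face-divisibility axiom in Definition \ref{divcond} gives $w(\tau)\mid w(\sigma)$, say $w(\sigma) = r\, w(\tau)$ for some $r\in R$. Now I argue by contraposition: if $w(\tau)\in I$, then since $I$ is an ideal of $R$ (so $R\cdot I\subseteq I$), the element $r\, w(\tau) = w(\sigma)$ lies in $I$ too. Taking the contrapositive, $w(\sigma)\notin I$ forces $w(\tau)\notin I$, which is precisely the closure-under-faces property required of $L$.

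Finally, since any subset of a simplicial complex that is closed under taking faces is itself a simplicial complex, $L$ is a simplicial subcomplex of $K$, completing the argument. I do not anticipate any genuine obstacle here: the only point requiring care is invoking the ideal property in the correct direction (absorption $R\cdot I\subseteq I$ rather than closure under addition) in tandem with the WSC divisibility axiom, and the proof uses no hypothesis on $R$ beyond being a commutative ring.
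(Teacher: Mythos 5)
Your argument is correct and complete: the divisibility axiom $w(\tau)\mid w(\sigma)$ combined with the absorption property $R\cdot I\subseteq I$ is exactly what makes $w^{-1}(I)$ upward closed, hence its complement closed under faces. The paper states this theorem as a citation from \cite{ren2017weighted} without reproducing the proof, but your reasoning is the standard (and essentially the only) argument for it, so there is nothing to add.
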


For the definition of homology of weighted simplicial complexes \cite[p.~2672]{ren2018weighted}, we require $R$ to be an integral domain with 1. 

\begin{defn}[{\cite[p.~2674]{ren2018weighted}}]
\label{boundary}
The \emph{weighted boundary map} $\partial_n: C_n(K)\to C_{n-1}(K)$ is the map: \[\partial_n(\sigma)=\sum_{i=0}^n\frac{w(\sigma)}{w(d_i(\sigma))}(-1)^id_i(\sigma)\] where the \emph{face maps} $d_i$ are defined as: \[d_i(\sigma)=[v_0,\dots,\widehat{v_i},\dots,v_n]\qquad\text{(deleting the vertex $v_i$)}\]  for any $n$-simplex $\sigma=[v_0,\dots,v_n]$. 
\end{defn}
\begin{theorem}[{\cite[p.~2676]{ren2018weighted}}]
\label{fdcommute}
Let $f:K\to L$ be a simplicial map. Then $f_\sharp\partial=\partial f_\sharp$, where $\partial$ refers to the relevant weighted boundary map.
\qed
\end{theorem}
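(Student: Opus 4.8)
The plan is to reduce the identity to a check on generators. Since $f_\sharp$ and $\partial$ are $R$-linear, it suffices to verify $f_\sharp\partial_n(\sigma)=\partial_n f_\sharp(\sigma)$ for an arbitrary $n$-simplex $\sigma=[v_0,\dots,v_n]$ of $K$. First I would recall the definition of the induced chain map for weighted simplicial complexes: writing $w$ for the weight functions on both $K$ and $L$, one sets $f_\sharp(\sigma)=\frac{w(\sigma)}{w(f(\sigma))}[f(v_0),\dots,f(v_n)]$ when $f$ is injective on the vertex set of $\sigma$ (so that $f(\sigma)$ is genuinely an $n$-simplex of $L$ and the scalar lies in $R$ by the compatibility of $f$ with the weights), and $f_\sharp(\sigma)=0$ otherwise; this recovers the classical induced chain map when all weights are equal. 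The argument then splits according to whether $f$ is injective on $\{v_0,\dots,v_n\}$.

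If $f$ is injective on $\sigma$, set $\tau=f(\sigma)$; then $f$ is injective on every face $d_i(\sigma)$ and $f(d_i(\sigma))=d_i(\tau)$. Expanding both sides with Definition~\ref{boundary}, I would match the two sums term by term: the $i$-th summand of $\partial_n f_\sharp(\sigma)$ carries the scalar $\frac{w(\sigma)}{w(\tau)}\cdot\frac{w(\tau)}{w(d_i(\tau))}$, the $i$-th summand of $f_\sharp\partial_n(\sigma)$ carries $\frac{w(\sigma)}{w(d_i(\sigma))}\cdot\frac{w(d_i(\sigma))}{w(d_i(\tau))}$, and both telescope to $\frac{w(\sigma)}{w(d_i(\tau))}$. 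This is the easy case.

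If $f$ is not injective on $\sigma$, then $f_\sharp(\sigma)=0$, so $\partial_n f_\sharp(\sigma)=0$ and it remains to show that $f_\sharp\partial_n(\sigma)=\sum_i(-1)^i\frac{w(\sigma)}{w(d_i(\sigma))}f_\sharp(d_i(\sigma))$ vanishes. I would run the classical (unweighted) argument while tracking the weights. If $f$ sends three or more vertices of $\sigma$ to a common image, or identifies two disjoint pairs, then every face $d_i(\sigma)$ still carries a repeated image, so each $f_\sharp(d_i(\sigma))=0$ and the sum is zero. Otherwise $f$ identifies exactly one pair $v_j,v_k$ with $j<k$ and is injective elsewhere, so only the $i=j$ and $i=k$ terms survive; as in the classical case $f(d_j(\sigma))$ and $f(d_k(\sigma))$ are the same $(n-1)$-simplex of $L$ up to the sign $(-1)^{k-1-j}$, which combines with $(-1)^j$ and $(-1)^k$ to give opposite signs, and the two accompanying weight coefficients are equal, since both simplify to $\frac{w(\sigma)}{w(f(d_j(\sigma)))}$ (using that $f(d_j(\sigma))$ and $f(d_k(\sigma))$ are the same unoriented simplex and therefore carry the same weight). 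Hence the two surviving terms cancel.

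I expect the single-identified-pair subcase to be the one delicate point: one has to carry out the classical sign-cancellation for collapsed faces and simultaneously verify that the extra scalars coming from the weighted boundary map do not spoil it. The reason this works is that those scalars depend only on the \emph{unoriented} simplices, so they do not see the vertex reordering responsible for the cancelling sign, and they telescope cleanly through the face maps. Linearity, the injective case, and the many-identifications subcase are all routine.
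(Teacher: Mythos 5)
Your proof is correct. Note that the paper itself gives no proof of this statement (it is imported from \cite{ren2017weighted} with a \verb|\qed|), and the definition of $f_\sharp$ you had to reconstruct --- namely $f_\sharp(\sigma)=\frac{w(\sigma)}{w(f(\sigma))}f(\sigma)$ when $f$ is injective on $\sigma$ and $0$ otherwise, with $w(f(\sigma))\mid w(\sigma)$ guaranteed by the morphism condition --- is exactly the one used in the cited source; given that, your generator-by-generator verification, including the telescoping of the weight ratios in the injective case and the sign cancellation for the single-identified-pair case, is the same standard computation carried out there.
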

\begin{defn}[{\cite[p.~2677]{ren2018weighted}}]
We define the \emph{weighted homology} of a WSC to be
\begin{equation*}
H_n(K,w):=\ker(\partial_n)/\Ima(\partial_{n+1}),
\end{equation*}
where $\partial_n$ is the weighted boundary map.
\end{defn}
\begin{prop}[{\cite[p.~2679]{ren2018weighted}}]
If all the simplices in $(K,w)$ have the same weight $a\in R\setminus\{0\}$, the weighted homology functor is the same as the usual simplicial homology functor.
\qed
\end{prop}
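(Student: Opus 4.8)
The plan is simply to unwind the definition of the weighted boundary map under the hypothesis that $w$ is constant with value $a \in R \setminus \{0\}$, and to observe that every weight coefficient collapses to $1$, so that the weighted chain complex literally coincides with the ordinary simplicial chain complex.

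First I would observe that for any simplex $\sigma \in K$ the divisibility requirement of Definition \ref{divcond} holds trivially, since $w(\sigma) = w(d_i(\sigma)) = a$, and that the quotient $\tfrac{w(\sigma)}{w(d_i(\sigma))} = \tfrac{a}{a}$ appearing in Definition \ref{boundary} is an unambiguous element of $R$: as $R$ is an integral domain and $a \neq 0$, the element $a$ is not a zero divisor, so the equation $a x = a$ has the unique solution $x = 1$. Hence every coefficient $\tfrac{w(\sigma)}{w(d_i(\sigma))}$ equals $1$, and the weighted boundary operator reduces to
\[
\partial_n(\sigma) = \sum_{i=0}^{n} (-1)^i d_i(\sigma),
\]
which is precisely the classical simplicial boundary map. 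Therefore the weighted chain complex $\bigl(C_\ast(K), \partial_\ast\bigr)$ is not merely isomorphic to, but identical with, the usual simplicial chain complex of $K$, and consequently $H_n(K, w) = \ker \partial_n / \Ima \partial_{n+1} = H_n(K)$ for all $n$.

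To obtain the statement at the level of functors, I would then check agreement on morphisms. Given a simplicial map $f : K \to L$ between WSCs all of whose simplices have weight $a$, the induced chain map $f_\sharp$ is compatible with the weighted boundary by Theorem \ref{fdcommute}; but since the weighted boundary on both $K$ and $L$ has already been identified with the ordinary one, and $f_\sharp$ carries no nontrivial weight factor in this constant-weight situation, $f_\sharp$ is exactly the classical induced chain map. Hence the induced homomorphisms $f_\ast \colon H_n(K,w) \to H_n(L,w)$ and $f_\ast \colon H_n(K) \to H_n(L)$ coincide, and the two functors agree on both objects and morphisms.

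There is no genuinely hard step here: the content is the recognition that the integral-domain hypothesis on $R$ is exactly what makes the symbol $\tfrac{a}{a}$ mean $1$ without ambiguity. The only point deserving a moment's care is to confirm, against the definition of $f_\sharp$ in \cite{ren2017weighted}, that it really does specialize to the classical induced chain map when the weight function is constant, rather than differing by a unit or a sign.
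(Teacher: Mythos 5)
Your proposal is correct and is exactly the standard argument: the paper itself supplies no proof (it cites \cite{ren2017weighted} and marks the statement \qed), and the intended justification is precisely the unwinding you give, namely that in an integral domain the coefficient $\tfrac{a}{a}$ is unambiguously $1$, so the weighted chain complex and induced chain maps coincide with the classical ones. Your extra care about functoriality on morphisms is a welcome completion rather than a deviation.
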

\subsection{Weighted Persistent Homology}
Given a weighted filtered complex $(\mathcal{K},w)=\{(K^i, w)\}_{i \geq 0}$, for the $i$th complex $K^i$ we have the associated weighted boundary maps $\partial_k^i$ and groups $C_k^i$, $Z_k^i$, $B_k^i$, $H_k^i$ for all integers $i, k\geq 0$, as shown in \cite{ren2018weighted}.

\begin{defn}[{\cite[p.~2679]{ren2018weighted}}]
The weighted boundary map $\partial_k^i$, where $i$ denotes the filtration index, is the weighted boundary map of the $i$th complex $K^i$. That is, $\partial_k^i$ is the map $\partial_k^i: C_k(K^i,w)\to C_{k-1}(K^i,w)$. The \emph{chain group} $C_k^i$ is the group $C_k(K^i,w)$. The \emph{cycle group} $Z_k^i$ is the group $\ker (\partial_k^i)$, while the \emph{boundary group} $B_k^i$ is the group $\Ima(\partial_{k+1}^i)$. The \emph{homology group} $H_k^i$ is the quotient group $Z_k^i/B_k^i$.
\end{defn}

\begin{defn}[{\cite[p.~2680]{ren2018weighted}}]
The \emph{$p$-persistent $k$th homology group of $(\mathcal{K},w)=\{(K^i, w)\}_{i \geq 0}$} is defined as
\begin{equation*}
H_k^{i,p}(\mathcal{K},w):=Z_k^i/(B_k^{i+p}\cap Z_k^i).
\end{equation*}
\end{defn}

\section{The Mayer-Vietoris Sequence and Weighted Homology}
\label{sec:mayer}
The Mayer-Vietoris sequence for weighted simplicial homology was first studied briefly in \cite[p.~235]{Dawson1990}. We prove that the Mayer-Vietoris sequence is exact for weighted simplicial homology, using an approach based on \cite[p.~142]{Munkres1984}, which is different from the approach given in \cite{Dawson1990}. In this section, we let $R$ be an integral domain.

W will need the following Lemma \ref{zigzag}, which is also known as the \emph{Zig-zag Lemma}\cite[p.~136]{Munkres1984}.
\begin{lemma}[Zig-zag Lemma]
\label{zigzag}
Let $\mathscr{C}=\{C_p,\partial_C\}$, $\mathscr{D}=\{D_p,\partial_D\}$ and $\mathscr{E}=\{E_p,\partial_E\}$ be chain complexes, and let $\phi$, $\psi$ be chain maps such that \[0\to\mathscr{C}\xrightarrow{\phi}\mathscr{D}\xrightarrow{\psi}\mathscr{E}\to 0\] is a short exact sequence of chain complexes.

Then there is a long exact homology sequence
\begin{equation*}
\dots\to H_p(\mathscr{C})\xrightarrow{\phi_*}H_p(\mathscr{D})\xrightarrow{\psi_*}H_p(\mathscr{E})\xrightarrow{\partial_*}H_{p-1}(\mathscr{C})\xrightarrow{\phi_*}H_{p-1}(\mathscr{D})\to\dots
\end{equation*}
where $\partial_*$ is induced by the boundary operator in $\mathscr{D}$.
\end{lemma}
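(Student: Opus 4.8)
The plan is to follow the standard homological-algebra argument, essentially the one in \cite[p.~136]{Munkres1984}, reducing everything to diagram chases in the short exact sequences of abelian groups $0\to C_p\xrightarrow{\phi}D_p\xrightarrow{\psi}E_p\to 0$ obtained degreewise. First I would construct the connecting homomorphism $\partial_*\colon H_p(\mathscr{E})\to H_{p-1}(\mathscr{C})$. Given a homology class in $H_p(\mathscr{E})$, pick a cycle representative $e\in E_p$. Since $\psi\colon D_p\to E_p$ is surjective, choose $d\in D_p$ with $\psi(d)=e$. Then $\psi(\partial_D d)=\partial_E\psi(d)=\partial_E e=0$, so by exactness at $D_{p-1}$ there is a unique $c\in C_{p-1}$ with $\phi(c)=\partial_D d$, the uniqueness coming from injectivity of $\phi$. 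One checks that $c$ is a cycle: $\phi(\partial_C c)=\partial_D\phi(c)=\partial_D\partial_D d=0$, so $\partial_C c=0$ again by injectivity of $\phi$. We then set $\partial_*[e]=[c]$.

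The first genuine task is to verify that $\partial_*$ is well defined, i.e.\ independent of the choice of the lift $d$ and of the cycle representative $e$. If $d'$ is another lift of $e$, then $d-d'\in\ker\psi=\Ima\phi$, say $d-d'=\phi(c_0)$, and tracing through the construction shows the resulting element of $C_{p-1}$ changes by $\partial_C c_0$, hence the homology class is unchanged. If $e$ is replaced by $e+\partial_E e_1$, lift $e_1$ to some $d_1\in D_p$ and use $d+\partial_D d_1$ as the lift of the new cycle; its image under $\partial_D$ equals $\partial_D d$, so $c$ is unchanged. Linearity of $\partial_*$ is then immediate from the linearity of $\phi$, $\psi$, and the boundary operators.

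With $\partial_*$ in hand, it remains to prove exactness at each of the three spots $H_p(\mathscr{C})$, $H_p(\mathscr{D})$, $H_p(\mathscr{E})$. In every case the inclusion $\Ima\subseteq\ker$ of consecutive maps follows formally from $\psi\phi=0$, $\psi\partial_D=\partial_E\psi$, $\phi\partial_C=\partial_D\phi$, and the definition of $\partial_*$. For the reverse inclusions I would argue by diagram chase: for exactness at $H_p(\mathscr{D})$, if $\psi_*[d]=0$ then $\psi(d)=\partial_E e_1$ for some $e_1\in E_{p+1}$; lifting $e_1$ to $d_1\in D_{p+1}$ and replacing $d$ by $d-\partial_D d_1$ produces a cycle lying in $\Ima\phi$, which pulls back via $\phi$ (using injectivity) to a cycle in $C_p$ whose class maps to $[d]$. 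Exactness at $H_p(\mathscr{C})$ and at $H_p(\mathscr{E})$ are handled by analogous chases, exploiting injectivity of $\phi$ in the former case and surjectivity of $\psi$ in the latter.

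I expect the main obstacle to be purely bookkeeping: keeping track of the several choices in each chase and repeatedly confirming that every constructed element is actually a cycle and that the resulting homology classes are independent of those choices. There is no conceptual difficulty once the connecting homomorphism is correctly set up; the weighting of the boundary maps plays no role here, since the only properties used are that $\phi$ and $\psi$ are chain maps fitting into the given short exact sequence, which holds by hypothesis.
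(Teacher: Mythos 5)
Your proposal is correct and is exactly the standard diagram-chase argument (construction of the connecting homomorphism, well-definedness, exactness at each of the three spots) that the paper itself defers to, citing Munkres p.~137 rather than reproducing it. No divergence from the paper's intended proof.
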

\begin{proof}
A detailed proof can be found in \cite[p.~137]{Munkres1984}.
\end{proof}

The following is a generalization of the Mayer-Vietoris sequence in simplicial homology \cite[p.~142]{Munkres1984}.
\begin{theorem}[{cf.\ \cite[p.~142]{Munkres1984}}]
Let $(K,w)$ be a weighted simplicial complex, with weight function $w:K\to R$. Let $(K_0,w)$, $(K_1,w)$ be weighted subcomplexes such that $K=K_0\cup K_1$. Let $A=K_0\cap K_1$. Then there is an exact sequence
\begin{equation*}
\dots\to H_p(A,w)\to H_p(K_0,w)\oplus H_p(K_1,w)\to H_p(K,w)\to H_{p-1}(A,w)\to\dots
\end{equation*}
which we call the \emph{Mayer-Vietoris sequence for weighted homology}.
\end{theorem}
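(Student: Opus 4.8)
The plan is to obtain the Mayer-Vietoris sequence as the long exact homology sequence attached to a short exact sequence of chain complexes, exactly as in the classical case \cite[p.~142]{Munkres1984}, with the only new ingredient being that all maps must be checked to be chain maps for the \emph{weighted} boundary operator. Concretely, for each $p$ I would consider the sequence
\begin{equation*}
0\to C_p(A,w)\xrightarrow{\ \alpha\ } C_p(K_0,w)\oplus C_p(K_1,w)\xrightarrow{\ \beta\ } C_p(K,w)\to 0,
\end{equation*}
where $\alpha(c)=(c,-c)$ (using the inclusion-induced maps $C_p(A,w)\hookrightarrow C_p(K_j,w)$) and $\beta(c_0,c_1)=c_0+c_1$ (using $C_p(K_j,w)\hookrightarrow C_p(K,w)$). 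First I would verify exactness of this sequence of $R$-modules in each degree: injectivity of $\alpha$ is immediate; $\beta$ is surjective because $K=K_0\cup K_1$, so every oriented $p$-simplex of $K$ lies in $K_0$ or $K_1$; and $\ker\beta=\Ima\alpha$ follows by splitting a chain according to which simplices lie in $A=K_0\cap K_1$ versus only in $K_0$ or only in $K_1$ — here I should note that the weight function is the restriction of $w$ in all four complexes, so a simplex carries the same coefficient regardless of which subcomplex it is viewed in, and the chain-level identifications are literally identity assignments on basis elements.

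The next step is to check that $\alpha$ and $\beta$ are chain maps, i.e.\ commute with the weighted boundary $\partial_p$. This is where one must be slightly careful, since the weighted boundary map $\partial_p(\sigma)=\sum_i \frac{w(\sigma)}{w(d_i\sigma)}(-1)^i d_i\sigma$ depends on the weight function. However, the inclusions $A\hookrightarrow K_j\hookrightarrow K$ are simplicial maps, and under these inclusions the weight of a simplex and of each of its faces is unchanged (again because each subcomplex carries the restricted weight). Hence the coefficients $\frac{w(\sigma)}{w(d_i\sigma)}$ are identical whether computed in $A$, in $K_j$, or in $K$, and Theorem \ref{fdcommute} (applied to each inclusion) gives $f_\sharp\partial=\partial f_\sharp$; it follows that $\alpha$ and $\beta$ commute with the weighted boundary maps, so we have a short exact sequence of chain complexes. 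Applying the Zig-zag Lemma (Lemma \ref{zigzag}) with $\mathscr{C}=C_*(A,w)$, $\mathscr{D}=C_*(K_0,w)\oplus C_*(K_1,w)$, and $\mathscr{E}=C_*(K,w)$ then yields the long exact sequence
\begin{equation*}
\dots\to H_p(A,w)\to H_p(K_0,w)\oplus H_p(K_1,w)\to H_p(K,w)\xrightarrow{\partial_*} H_{p-1}(A,w)\to\dots,
\end{equation*}
which is precisely the Mayer-Vietoris sequence for weighted homology.

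I expect no serious obstacle: the argument is formally identical to the unweighted case, and the single point requiring attention — that the weighted boundary formula is compatible with the inclusion maps — is handled cleanly by the observation that all four complexes inherit the \emph{same} weight function and by Theorem \ref{fdcommute}. The mild subtlety worth a sentence in the write-up is that $C_p(K_0,w)$ and $C_p(K_1,w)$ need not be subgroups of $C_p(K,w)$ with trivial intersection; their intersection inside $C_p(K,w)$ is exactly $C_p(A,w)$, which is what makes $\ker\beta = \Ima\alpha$ and hence drives the whole construction.
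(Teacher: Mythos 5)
Your proposal is correct and follows essentially the same route as the paper's own proof: the same short exact sequence $0\to C_*(A,w)\to C_*(K_0,w)\oplus C_*(K_1,w)\to C_*(K,w)\to 0$ with the maps $c\mapsto(c,-c)$ and $(c_0,c_1)\mapsto c_0+c_1$, the same appeal to Theorem \ref{fdcommute} to see that these are chain maps for the weighted boundary, and the same application of the Zig-zag Lemma (Lemma \ref{zigzag}). The exactness checks (surjectivity via $K=K_0\cup K_1$, and $\ker\beta=\Ima\alpha$ via $C_p(K_0,w)\cap C_p(K_1,w)=C_p(A,w)$ inside $C_p(K,w)$) match the paper's argument step for step.
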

\begin{proof}
The proof is similar to \cite[p.~142]{Munkres1984}.
\end{proof}
%%%%%
\section{Further Properties of Weighted Simplicial Complexes}
\label{sec:filt}
In this section, let $I$ be an ideal of a commutative ring $R$, and let $(K,w)$ be a weighted simplicial complex, with weight function $w: K\to R$. In Theorem \ref{idealcomplex}, it is proved that $K\setminus w^{-1}(I)$ is a simplicial subcomplex of $K$. We explore this idea further in this section. We show that a descending chain of ideals gives rise to a filtration, and relate this filtration to the concept of Weight Rank Clique filtration \cite{Petri2013} which is used in the study of complex networks \cite{boccaletti2006complex,strogatz2001exploring,albert2002statistical,Petri2013}.
\begin{theorem}
\label{idealfiltration}
Let $R=I_0\supseteq I_1\supseteq \dots\supseteq 0$ be a descending chain of ideals of $R$. Let $(K,w)$ be a WSC (with weight function $w:K\to R$) such that all simplices have nonzero weights. Define
\begin{equation*}
L^i=K\setminus w^{-1}(I_i)
\end{equation*}
for $i\in\mathbb{Z}_{\geq 0}$.

Then 
\begin{equation*}
\emptyset=L^0\subseteq L^1\subseteq\dots\subseteq K\setminus w^{-1}(0)=K
\end{equation*}
is a filtration of $K$.
\end{theorem}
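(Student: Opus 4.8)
The plan is to verify two things: first, that each $L^i$ is a genuine simplicial subcomplex of $K$, and second, that the inclusions $L^i \subseteq L^{i+1}$ hold, with the two endpoint identifications $L^0 = \emptyset$ and $K \setminus w^{-1}(0) = K$ following immediately from $I_0 = R$ and the hypothesis that all weights are nonzero.

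For the first point, I would simply invoke Theorem \ref{idealcomplex}: since $I_i$ is an ideal of the commutative ring $R$ and $(K,w)$ is a WSC, the set $L^i = K \setminus w^{-1}(I_i)$ is a simplicial subcomplex of $K$. No extra work is needed here beyond citing the earlier result.

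For the chain of inclusions, the key observation is that the chain of ideals is descending: $I_{i+1} \subseteq I_i$. Taking preimages under $w$ reverses nothing, so $w^{-1}(I_{i+1}) \subseteq w^{-1}(I_i)$, and hence $K \setminus w^{-1}(I_i) \subseteq K \setminus w^{-1}(I_{i+1})$, i.e.\ $L^i \subseteq L^{i+1}$. For the left end, since $I_0 = R$ we have $w^{-1}(I_0) = w^{-1}(R) = K$, so $L^0 = K \setminus K = \emptyset$. For the right end, the descending chain terminates with the zero ideal in the sense that the last term shown is $0$; using that all simplices have nonzero weight, $w^{-1}(0) = \emptyset$, so $K \setminus w^{-1}(0) = K$. (If one prefers to think of the chain as infinite with $\bigcap_i I_i \supseteq 0$ rather than literally ending at $0$, one can instead just observe that $L^i$ is always contained in $K \setminus w^{-1}(0) = K$, which is all the statement asserts.)

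Since each $L^i$ is a subcomplex and the $L^i$ form an increasing nested sequence starting at $\emptyset$ and contained in $K$, this is by definition a filtration of $K$, completing the proof. I do not anticipate a genuine obstacle here: the argument is essentially a repackaging of Theorem \ref{idealcomplex} together with the order-reversing behaviour of preimages on a descending chain; the only point requiring a moment's care is making sure the endpoint conventions ($I_0 = R$ and the nonzero-weight hypothesis) are used correctly to pin down $L^0 = \emptyset$ and $L^\infty = K$.
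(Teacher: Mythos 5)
Your proposal is correct and follows essentially the same route as the paper: cite Theorem \ref{idealcomplex} for each $L^i$ being a subcomplex, use $I_{i+1}\subseteq I_i$ to get $L^i\subseteq L^{i+1}$ (the paper phrases this element-wise, you phrase it via preimages and complements, but it is the same argument), and handle the endpoints via $w^{-1}(R)=K$ and $w^{-1}(0)=\emptyset$. No gaps.
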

\begin{proof}
By Theorem \ref{idealcomplex}, all of the $L^i$ are simplicial complexes of $K$. Since $w^{-1}(R)=K$, thus $L^0=K\setminus K=\emptyset$. Also since $w^{-1}(0)=\emptyset$, thus $K\setminus w^{-1}(0)=K$.

Let $\sigma\in L^i$ for some $i\in\mathbb{Z}_{\geq 0}$. Then $w(\sigma)\notin I_i\supseteq I_{i+1}$. Hence $w(\sigma)\notin I_{i+1}$, and so $\sigma\in L^{i+1}$. Hence $L^i\subseteq L^{i+1}$ for all $i\in\mathbb{Z}_{\geq 0}$.
\end{proof}

We now discuss the filtration in Theorem \ref{idealfiltration} in relation to Weighted Rank Clique filtration \cite[p.~7]{Petri2013}.
\begin{defn}[{\cite[p.~7]{Petri2013}}]
The \emph{Weight Rank Clique filtration} on a weighted network (weighted graph) $\Omega$ combines the clique complex (simplicial complex built from the cliques of a graph) construction with a thresholding on weights following three main steps.
\begin{itemize}
\item Rank the weight of links (edges of the graph $\Omega$) from $w_\text{max}:=\epsilon_1$ to $w_\text{min}$. The discrete decreasing parameter $\epsilon_t$, \[w_\text{max}=\epsilon_1\geq \epsilon_2 \geq \dots\geq w_\text{min},\] indexes the sequence. 
\item At each step $t$ of the decreasing edge ranking we consider the thresholded graph $G(\epsilon_t)$, i.e.\ the subgraph of $\Omega$ with links of weights larger than $\epsilon_t$.
\item For each graph $G(\epsilon_t)$ we build the clique complex $K(G(\epsilon_t))$.
\end{itemize}
The clique complexes are nested along the growth of $t$ and determine the weight rank clique filtration. A key feature is that links with larger weights will appear earlier in $G(\epsilon_t)$, while links with smaller weights will appear later in $G(\epsilon_t)$.
\end{defn}
We now consider an analogous definition of Weighted Rank Clique filtration for the case of weighted simplicial complexes, which we call \emph{Weight Rank Simplicial filtration}. We drop the clique complex construction part of Weight Rank Clique filtration since we already start with a simplicial complex as our initial object. Also, since the weights of links are indexed by a discrete parameter $\epsilon_t$, in our analogous definition we may choose our weights of simplices to lie in a subset of a discrete ordered ring, say $\mathbb{Z}_{>0}\subseteq R=\mathbb{Z}$.
\begin{defn}[Weight Rank Simplicial filtration]
\label{wrsf}
The \emph{Weight Rank Simplicial filtration} on a weighted simplicial complex $(K,w)$ with $w: K\to\mathbb{Z}_{>0}\subseteq\mathbb{Z}$ is defined via a thresholding on weights following two main steps.
\begin{itemize}
\item Rank the weights of simplices of $K$ from $w_\text{min}:=\epsilon_1$ to $w_\text{max}$. The discrete increasing paramenter $\epsilon_t\in\mathbb{Z}_{>0}$, \[w_{min}=\epsilon_1\leq \epsilon_2\leq\dots\leq w_\text{max},\] indexes the sequence.
\item At each step $t$ of the increasing weight ranking we consider the thresholded simplicial complex $L(\epsilon_t)$, i.e.\ the subcomplex\footnote{Let $\sigma\in L(\epsilon_t)$. For any nonempty $\tau\subseteq\sigma$, we have $w(\tau)\mid w(\sigma)$ and hence $w(\tau)\leq w(\sigma)<\epsilon_t$. Thus $\tau\in L(\epsilon_t)$, and hence $L(\epsilon_t)$ is indeed a subcomplex.} of $(K,w)$ consisting of all simplices of weights smaller than $\epsilon_t$.
\end{itemize}
The Weight Rank Simplicial filtration can be viewed as a sublevel set filtration. The subcomplexes $\emptyset=L(\epsilon_1)\subseteq L(\epsilon_2)\subseteq\dots\subseteq L(w_\text{max})\subseteq K$ clearly form a filtration. A key feature is that simplices with smaller weights will appear earlier in $L(\epsilon_t)$. 
\end{defn}
\begin{remark}
Note that the weights of simplices in $(K,w)$ have to satisfy the divisibility condition in Definition \ref{divcond}.
\end{remark}
In Weight Rank Simplicial filtration, the simplices with smaller weights appear earlier in the filtration which is the direct opposite case to the Weight Rank Clique filtration. In practical applications, this is unlikely to be an issue as we can always reverse the ordering of the weights if necessary, by assigning smaller weights (instead of larger weights) to the simplices that we want to appear first. We now prove a theorem applying Theorem \ref{idealfiltration} to relate a descending chain of ideals in $\mathbb{Z}$ with Weight Rank Simplicial filtration.
\begin{theorem}
\label{idealequiv}
Let $(K,w)$ be a WSC with $w: K\to\mathbb{Z}_{>0}$, such that the weights of simplices of $K$ are totally ordered by division. Consider the Weight Rank Simplicial filtration as described in Definition \ref{wrsf}.
\begin{itemize}
\item Rank the weights of simplices of $K$ from $w_\text{min}:=w_1$ to $w_\text{max}$, \[w_\text{min}=w_1\mid w_2\mid\dots\mid w_\text{max},\] indexed by the parameter $w_t\in\mathbb{Z}_{>0}$.
\item At each step $t$ of increasing weight ranking we consider the thresholded simplicial complex $L(w_t)$, i.e.\ the subcomplex of $(K,w)$ with simplices of weights smaller than $w_t$.
\end{itemize}
Then the filtration $\mathscr{F}_1$,
\begin{equation*}
\emptyset=L(w_1)\subseteq L(w_2)\subseteq\dots\subseteq L(w_\text{max})\subseteq K,
\end{equation*}
is the same as the filtration $\mathscr{F}_2$,
\begin{equation*}
\emptyset=L^1\subseteq L^2\subseteq\dots\subseteq K,
\end{equation*}
where $L^t=K\setminus w^{-1}(w_t\mathbb{Z})$ for $t\geq 1$, $t\in\mathbb{Z}$. 
\end{theorem}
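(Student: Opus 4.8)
The plan is to show the two filtrations consist of exactly the same subcomplexes, step by step, by identifying $L(w_t)$ with $L^t = K \setminus w^{-1}(w_t\mathbb{Z})$ for each $t$. Since both filtrations are indexed by the same totally-ordered-by-division sequence $w_1 \mid w_2 \mid \dots \mid w_\text{max}$, it suffices to prove the set equality $L(w_t) = L^t$ for every $t \geq 1$. This reduces the theorem to a purely elementary statement about when a positive integer lies in the principal ideal $w_t\mathbb{Z}$.

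First I would unwind the definitions. A simplex $\sigma$ belongs to $L(w_t)$ iff $w(\sigma) < w_t$, and $\sigma$ belongs to $L^t = K \setminus w^{-1}(w_t\mathbb{Z})$ iff $w(\sigma) \notin w_t\mathbb{Z}$, i.e.\ iff $w_t \nmid w(\sigma)$. So the content is the claim: for a simplex $\sigma$ in $K$, one has $w(\sigma) < w_t$ if and only if $w_t \nmid w(\sigma)$. The forward direction is immediate since $w(\sigma) \in \mathbb{Z}_{>0}$: if $w(\sigma) < w_t$ then $w_t$ cannot divide the strictly smaller positive integer $w(\sigma)$. For the converse I would use the hypothesis that the weights of simplices are totally ordered by division: given $w_t \nmid w(\sigma)$, if we had $w(\sigma) \geq w_t$ then in particular $w(\sigma) \neq w_t$, and comparability under division forces either $w(\sigma) \mid w_t$ or $w_t \mid w(\sigma)$; the latter is excluded by assumption, so $w(\sigma) \mid w_t$, which together with $w(\sigma) \geq w_t > 0$ forces $w(\sigma) = w_t$, a contradiction. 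Hence $w(\sigma) < w_t$, completing the equivalence and therefore the set equality $L(w_t) = L^t$.

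Finally I would note the endpoint bookkeeping matches: $L(w_1) = \emptyset = L^1$ since $w_1 = w_\text{min}$ divides every weight (so $w^{-1}(w_1\mathbb{Z}) = K$), and the chains terminate correctly at $K$ by Theorem \ref{idealfiltration} applied to the descending chain $\mathbb{Z} \supseteq w_1\mathbb{Z} \supseteq w_2\mathbb{Z} \supseteq \dots \supseteq 0$, which is genuinely descending precisely because $w_1 \mid w_2 \mid \dots$. The only subtlety I anticipate is making sure the total-order-by-division hypothesis is used in exactly the right place — it is what rules out the pathological case of two incomparable weights straddling $w_t$ — so the main (mild) obstacle is simply to state the elementary divisibility lemma cleanly rather than any deep argument; the rest is unwinding definitions and invoking Theorem \ref{idealfiltration}.
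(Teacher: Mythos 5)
Your proposal is correct and follows essentially the same route as the paper: reduce to the set equality $L(w_t)=L^t$ and establish the biconditional $w(\sigma)<w_t \iff w_t\nmid w(\sigma)$, with the endpoints and the filtration structure of $\mathscr{F}_2$ handled by Theorem \ref{idealfiltration}. In fact you supply slightly more detail than the paper, which asserts that biconditional within a chain of equivalences without spelling out that the converse direction is exactly where the total-order-by-division hypothesis is used.
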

\begin{proof}
Note that \[\mathbb{Z}\supseteq w_1\mathbb{Z}\supseteq w_2\mathbb{Z}\supseteq\dots\supseteq 0\] is a descending chain of ideals of $\mathbb{Z}$. It is given that all simplices of $(K,w)$ have nonzero weights. By Theorem \ref{idealfiltration}, $\mathscr{F}_2$ is a filtration of $K$. Note that \[L^1=K\setminus w^{-1}(w_1\mathbb{Z})=K\setminus K=\emptyset=L(w_1).\]
Now it suffices to prove that $L(w_t)=L^t$, for all $t\geq 1$, $t\in\mathbb{Z}$.

We have the following equivalent statements.
\begin{align*}
\sigma\in L(w_t)&\iff w(\sigma)<w_t\\
&\iff w_t\nmid w(\sigma)\\
&\iff w(\sigma)\notin w_t\mathbb{Z}\\
&\iff \sigma\in K\setminus w^{-1}(w_t\mathbb{Z})\\
&\iff \sigma\in L_t.
\end{align*}
Hence we have shown that $L(w_t)=L^t$ as desired.
\end{proof}
\subsection{Application}
\label{subsec:filt}
The Weighted Rank Simplicial filtration in Definition \ref{wrsf} provides an alternative way to construct a filtration of (weighted) simplicial complexes from a weighted network (weighted graph) $\Omega$. First we construct the clique complex $K$ from $\Omega$, and assign postive integer weights to make $K$ into a weighted simplicial complex $(K,w)$. This can be done as follows:
\begin{itemize}
\item Set all 0-simplices (vertices) in $K$ to have weight 1.
\item Rank the weight of links (edges) of $\Omega$ in increasing/decreasing order (depending on which edges the user wishes to appear first in the resulting filtration).
\item Set the weight of each 1-simplex (edge) in $K$ to be $2^k$, where $k$ is its rank in the weight ranking of $\Omega$ (edges can have the same rank if they have the same weight in $\Omega$).
\item For higher dimensional simplices, its weight is set to be the product of all the weights of the 1-simplices contained in it.
\end{itemize}

Then we carry out Weight Rank Simplicial filtration to obtain a filtration $\mathscr{F}$ of $(K,w)$. The filtration $\mathscr{F}$ can be described in terms of complements of preimage of ideals, as shown in Theorem \ref{idealequiv}.

Though the weights of the form $2^k$ can be very large integers, in practice we only need to store and compute the exponent $k$. Divisibility can be checked easily since $2^{k_1}\mid 2^{k_2}$ if and only if $k_1\leq k_2$.

Note that in the entire process, we only construct the clique complex once. In general, it is desirable to reduce the number of times we construct the clique complex \cite{zomorodian2010tidy}.

\subsection{Filtration related to the Stanley-Reisner Ideal of a Weighted Simplicial Complex}
\label{subsec:stanley}
We show a type of filtration that has potential relations to Stanley-Reisner theory\cite{stanley2007combinatorics,miller2004combinatorial} which is an important topic in algebraic combinatorics, combinatorial commutative algebra and algebraic geometry. 

\begin{defn}[cf.\ {\cite[p.~53]{stanley2007combinatorics}\cite[p.~5]{miller2004combinatorial}}]
Let $\Delta$ be a finite simplicial complex with vertex set $V=\{x_1,\dots,x_n\}$. Let $k$ be a field. The Stanley-Reisner ring $k[\Delta]$ is defined by
\begin{equation*}
k[\Delta]=k[x_1,\dots,x_n]/I_\Delta,
\end{equation*}
where the Stanley-Reisner ideal is given by
\begin{equation*}
I_\Delta=(x_{i_1}x_{i_2}\dots x_{i_r}\mid \{x_{i_1}\},\dots,\{x_{i_r}\}\in\Delta, \{x_{i_1},x_{i_2},\dots,x_{i_r}\}\notin\Delta).
\end{equation*}

In other words, $k[\Delta]$ is obtained from the polynomial ring $k[x_1,\dots,x_n]$ by quotienting out the ideal $I_\Delta$ generated by the square-free monomials corresponding to the non-faces of $\Delta$.
\end{defn}
\begin{remark}
We have added the condition $\{x_{i_1}\},\dots,\{x_{i_r}\}\in\Delta$ in the definition of the Stanley-Reisner ideal $I_\Delta$ to follow the usual convention that every vertex $\{x_i\}$ should be a simplex in $\Delta$. Hence, none of the variables $\{x_i\}$ belong to $I_\Delta$.
\end{remark}

\begin{defn}
Let $k$ be a field. Let $(\Delta_1,w)$ be a WSC where $\Delta_1$ is a finite simplicial complex with vertex set $V=\{x_1,\dots,x_n\}$, and $w:K\to k[x_1,\dots,x_n]$ is a weight function.

By Theorem \ref{idealcomplex}, $\Delta_2:=\Delta_1\setminus w^{-1}(I_{\Delta_1})$ is a subcomplex of $\Delta_1$. By induction, define $\Delta_{i+1}:=\Delta_i\setminus w^{-1}(I_{\Delta_i})$, which is a subcomplex of $\Delta_i$. Then we have a filtration
\begin{equation*}
\mathcal{F}_{(\Delta_1,w)}:=\{\dots\subseteq\Delta_{i+1}\subseteq\Delta_i\subseteq\Delta_{i-1}\subseteq\dots\subseteq\Delta_1\}
\end{equation*}
which we call the \emph{Stanley-Reisner filtration} of the WSC $(\Delta_1,w)$.
\end{defn}

\begin{remark}
Since $\Delta_1$ is a \emph{finite} simplicial complex, the Stanley-Reisner filtration must eventually stabilize. That is, that there must exist an integer $k$ such that $\Delta_i=\Delta_k$ for all $i\geq k$. For convenience, we may truncate the Stanley-Reisner filtration by letting the first term of the filtration be $\Delta_k$. Hence, in practice we may always assume that the Stanley-Reisner filtration has finitely many terms.
\end{remark}

We prove a few propositions which show that the Stanley-Reisner filtration $\mathcal{F}_{(\Delta_1,w)}$ can give information about the weighted simplicial complex $(\Delta_1,w_1)$.

\begin{prop}
\label{prop:stanleytrivial}
Let $(\Delta_1,w)$ be a WSC where $\Delta_1$ is a simplex $\{x_1,\dots,x_n\}$, and $w(\sigma)\neq 0$ for all $\sigma\in\Delta_1$. Then $\mathcal{F}_{(\Delta_1,w)}=\{\Delta_1\subseteq\dots\subseteq\Delta_1\}$ is the trivial filtration consisting of only $\Delta_1$.
\end{prop}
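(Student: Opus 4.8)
The plan is to exploit the fact that a full simplex has no non-faces, so that its Stanley-Reisner ideal is trivial.

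First I would observe that, since $\Delta_1$ is the simplex on vertex set $V=\{x_1,\dots,x_n\}$, every subset $\{x_{i_1},\dots,x_{i_r}\}\subseteq V$ is a face of $\Delta_1$. Consequently there are \emph{no} non-faces, and the generating set in the definition of the Stanley-Reisner ideal $I_{\Delta_1}=(x_{i_1}\cdots x_{i_r}\mid\{x_{i_1}\},\dots,\{x_{i_r}\}\in\Delta_1,\ \{x_{i_1},\dots,x_{i_r}\}\notin\Delta_1)$ is empty. Hence $I_{\Delta_1}=(0)$ is the zero ideal of $k[x_1,\dots,x_n]$.

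Next I would compute $w^{-1}(I_{\Delta_1})$. Since $I_{\Delta_1}=\{0\}$ and by hypothesis $w(\sigma)\neq 0$ for every $\sigma\in\Delta_1$, no simplex of $\Delta_1$ is sent into $I_{\Delta_1}$, so $w^{-1}(I_{\Delta_1})=\emptyset$. Therefore $\Delta_2=\Delta_1\setminus w^{-1}(I_{\Delta_1})=\Delta_1\setminus\emptyset=\Delta_1$. Since $\Delta_2=\Delta_1$ and the weight function is unchanged, an immediate induction (repeating the same argument with $\Delta_i$ in place of $\Delta_1$, noting $\Delta_i=\Delta_1$ is still a full simplex with all weights nonzero) gives $\Delta_{i+1}=\Delta_i$ for all $i\geq 1$. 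Thus $\mathcal{F}_{(\Delta_1,w)}=\{\Delta_1\subseteq\dots\subseteq\Delta_1\}$, the trivial filtration.

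I do not anticipate a genuine obstacle here; the only point requiring a little care is to confirm the convention that ``$\Delta_1$ is a simplex'' means $\Delta_1$ consists of \emph{all} subsets of $V$ (so that the non-face set is truly empty), and to note that the zero ideal's preimage under $w$ is exactly the set of weight-zero simplices, which is empty by hypothesis. Everything else is a direct unwinding of the definitions.
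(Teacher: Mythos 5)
Your proof is correct and follows essentially the same route as the paper's: the full simplex has no non-faces, so $I_{\Delta_1}=0$, hence $w^{-1}(I_{\Delta_1})=\emptyset$ by the nonzero-weight hypothesis, and induction gives $\Delta_i=\Delta_1$ for all $i$. Your write-up is just a slightly more explicit unwinding of the same argument.
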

\begin{proof}
If $\Delta_1$ is a simplex, then $I_{\Delta_1}=0$ since all faces $\{x_{i_1},x_{i_2},\dots,x_{i_r}\}$ lie in $\Delta_1$. Hence $w^{-1}(I_{\Delta_1})=\emptyset$ and $\Delta_2=\Delta_1$. By induction, $\Delta_i=\Delta_1$ for all $i$.
\end{proof}

The following contrapositive of Proposition \ref{prop:stanleytrivial} is useful for extracting some information about the WSC given the Stanley-Reisner filtration.
\begin{cor}
If $\mathcal{F}_{(\Delta_1,w)}$ is \emph{not} the trivial filtration $\{\Delta_1\subseteq\dots\subseteq\Delta_1\}$, then either $\Delta_1$ is \emph{not} a simplex $\{x_1,\dots,x_n\}$ or $w(\sigma)=0$ for some $\sigma\in\Delta_1$.
\qed
\end{cor}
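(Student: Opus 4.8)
The final statement is the corollary, which is simply the contrapositive of Proposition \ref{prop:stanleytrivial}. The plan is to observe that Proposition \ref{prop:stanleytrivial} has the logical form ``$P \wedge Q \implies S$'', where $P$ is the statement ``$\Delta_1$ is a simplex $\{x_1,\dots,x_n\}$'', $Q$ is ``$w(\sigma) \neq 0$ for all $\sigma \in \Delta_1$'', and $S$ is ``$\mathcal{F}_{(\Delta_1,w)}$ is the trivial filtration $\{\Delta_1 \subseteq \dots \subseteq \Delta_1\}$''. The contrapositive is then ``$\neg S \implies \neg(P \wedge Q)$'', i.e.\ ``$\neg S \implies \neg P \vee \neg Q$'', which is exactly the assertion of the corollary: if $\mathcal{F}_{(\Delta_1,w)}$ is not the trivial filtration, then either $\Delta_1$ is not a simplex or $w(\sigma) = 0$ for some $\sigma \in \Delta_1$.

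First I would invoke Proposition \ref{prop:stanleytrivial} directly. Suppose, for the sake of the argument, that $\mathcal{F}_{(\Delta_1,w)}$ is not the trivial filtration, yet both $P$ and $Q$ hold, i.e.\ $\Delta_1$ is a simplex $\{x_1,\dots,x_n\}$ and $w(\sigma) \neq 0$ for every $\sigma \in \Delta_1$. Then Proposition \ref{prop:stanleytrivial} applies verbatim to $(\Delta_1,w)$ and yields that $\mathcal{F}_{(\Delta_1,w)}$ \emph{is} the trivial filtration, contradicting our assumption. Hence at least one of $P$, $Q$ must fail, which is precisely the conclusion.

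Since this is a pure restatement via contraposition, there is essentially no mathematical obstacle; the only thing to be careful about is the logical bookkeeping, namely correctly negating the conjunction $P \wedge Q$ into the disjunction $\neg P \vee \neg Q$ (De Morgan), and making sure the ``either $\dots$ or $\dots$'' in the corollary is read as the inclusive ``or'', which is what the proof delivers. No further lemmas from the excerpt are needed beyond Proposition \ref{prop:stanleytrivial} itself, so the proof is a one-line contrapositive argument and can simply be marked $\qed$ as it already is in the excerpt.
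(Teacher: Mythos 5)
Your proof is correct and matches the paper exactly: the paper states this corollary with no written proof precisely because it is the contrapositive of Proposition \ref{prop:stanleytrivial}, which is the one-line argument you give. The De Morgan bookkeeping you spell out is the only content needed.
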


Proposition \ref{prop:stanleytrivial} can also be stated in the language of weighted persistent homology.
\begin{prop}
Let $(\Delta_1,w)$ be a WSC where $\Delta_1$ is a simplex $\{x_1,\dots,x_n\}$, and $w(\sigma)\neq 0$ for all $\sigma\in\Delta_1$. Let $(\mathcal{K},w)=\{(K^i,w)\}_{i\geq 0}$ be the weighted filtered complex corresponding to $\mathcal{F}_{(\Delta_1,w)}$. (We renumber and relabel $\mathcal{F}_{(\Delta_1,w)}$ such that the index starts from 0 and call it $\{(K^i,w)\}_{i\geq 0}$.)

Then
\begin{equation}
H_k^{i,p}(\mathcal{K},w)=H_k(\Delta_1,w)
\end{equation}
for all $i$, $p$.
\end{prop}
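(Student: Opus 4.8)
The plan is to reduce everything to Proposition~\ref{prop:stanleytrivial}. Since $\Delta_1$ is a simplex $\{x_1,\dots,x_n\}$ with $w(\sigma)\neq 0$ for all $\sigma\in\Delta_1$, that proposition says the Stanley-Reisner filtration is trivial, $\mathcal{F}_{(\Delta_1,w)}=\{\Delta_1\subseteq\dots\subseteq\Delta_1\}$. After the renumbering and relabeling described in the statement, this means $K^i=\Delta_1$ for every $i\geq 0$, with the weight on each $K^i$ being the restriction of $w$, i.e.\ $w$ itself. Consequently all the associated weighted boundary maps literally coincide: $\partial_k^i=\partial_k$, where $\partial_k$ is the weighted boundary map of $(\Delta_1,w)$, for all $i,k$.

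Next I would unwind the definitions of the persistence groups. Because $\partial_k^i=\partial_k$ for every $i$, we get $Z_k^i=\ker\partial_k^i=\ker\partial_k=Z_k(\Delta_1,w)$ and $B_k^i=\Ima\partial_{k+1}^i=\Ima\partial_{k+1}=B_k(\Delta_1,w)$, both independent of $i$. In particular $B_k^{i+p}=B_k(\Delta_1,w)$ for all $i$ and $p$. Using $\Ima\partial_{k+1}\subseteq\ker\partial_k$ (that is, $\partial_k\partial_{k+1}=0$ for the weighted boundary map, as established in \cite{ren2017weighted}), we obtain $B_k^{i+p}\cap Z_k^i=B_k(\Delta_1,w)\cap Z_k(\Delta_1,w)=B_k(\Delta_1,w)$. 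Therefore
\[
H_k^{i,p}(\mathcal{K},w)=Z_k^i/(B_k^{i+p}\cap Z_k^i)=Z_k(\Delta_1,w)/B_k(\Delta_1,w)=H_k(\Delta_1,w)
\]
for all $i$ and $p$, which is the claim; in particular the persistent homology exhibits no nontrivial births or deaths.

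There is essentially no obstacle here: the only point requiring a little care is the bookkeeping in the renumbering step, together with checking that the weight function restricts consistently so that the boundary operators agree on the nose (not merely up to isomorphism). One could also remark that the argument uses only that the filtration is eventually constant equal to $\Delta_1$, so the truncation observed in the remark preceding the proposition is precisely what makes the renumbering harmless.
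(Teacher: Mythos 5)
Your proposal is correct and matches the paper's own argument: both invoke Proposition \ref{prop:stanleytrivial} to conclude the filtration is trivial, deduce that the cycle and boundary groups are independent of the filtration index, and identify $H_k^{i,p}(\mathcal{K},w)$ with $H_k(\Delta_1,w)$. Your explicit note that $B_k^{i+p}\cap Z_k^i=B_k$ because $\Ima\partial_{k+1}\subseteq\ker\partial_k$ is a small extra detail the paper leaves implicit.
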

\begin{proof}
By Proposition \ref{prop:stanleytrivial}, $\mathcal{F}_{(\Delta_1,w)}=(\mathcal{K},w)$ is the trivial filtration. Hence, the cycle group $Z_k^i$ is equal to $Z_k^0$ for all $i$, and the boundary group $B_k^{i+p}$ is equal to $B_k^0$ for all $i$, $p$. Thus $H_k^{i,p}(\mathcal{K},w)=H_k^{0,0}(\mathcal{K},w)=H_k(K^0,w)=H_k(\Delta_1,w)$ for all $i$, $p$.
\end{proof}

\begin{prop}
\label{prop:isolatedvertices}
Let $(\Delta_1,w)$ be a WSC where $\Delta_1$ consists of $n$ isolated vertices $\{x_1\},\dots,\{x_n\}$, where $n\geq 3$. Suppose $w(\{x_i\})=\frac{x_1x_2\dots x_n}{x_i}$ for all $i$.

Then $\mathcal{F}_{(\Delta_1,w)}=\{\emptyset\subseteq\dots\subseteq\emptyset=\Delta_2\subseteq\Delta_1\}$.
\end{prop}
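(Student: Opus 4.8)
The plan is to compute the Stanley--Reisner ideal $I_{\Delta_1}$ explicitly, check that every vertex weight lands inside it, and then observe that once the filtration reaches the empty complex it can no longer grow.

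First I would identify $I_{\Delta_1}$. Since $\Delta_1$ consists of the $n$ isolated vertices $\{x_1\},\dots,\{x_n\}$, its only faces are the singletons (and the empty face), so every subset of $V=\{x_1,\dots,x_n\}$ of cardinality at least $2$ is a non-face. Hence $I_{\Delta_1}$ is generated by the square-free monomials of degree $\geq 2$; as each such monomial is a multiple of a product of two distinct variables, this gives $I_{\Delta_1}=(x_ix_j\mid 1\le i<j\le n)$.

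Next I would verify that $w(\{x_i\})\in I_{\Delta_1}$ for every $i$. By hypothesis $w(\{x_i\})=\frac{x_1\cdots x_n}{x_i}=\prod_{j\neq i}x_j$, a square-free monomial of degree $n-1$. Because $n\geq 3$ we have $n-1\geq 2$, so there are at least two distinct indices $j,k$ different from $i$, whence $x_jx_k\mid\prod_{l\neq i}x_l$ and therefore $w(\{x_i\})\in I_{\Delta_1}$. (This is exactly where $n\geq 3$ enters: for $n\leq 2$ the monomial $\prod_{j\neq i}x_j$ has degree $\leq 1$ and does not lie in $I_{\Delta_1}$.) Consequently $w^{-1}(I_{\Delta_1})$ contains all $n$ vertices of $\Delta_1$, so by Theorem \ref{idealcomplex} the subcomplex $\Delta_2=\Delta_1\setminus w^{-1}(I_{\Delta_1})$ contains no simplices, i.e.\ $\Delta_2=\emptyset$.

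Finally I would argue that the Stanley--Reisner filtration stabilizes at $\emptyset$: the empty complex has no vertices, so its Stanley--Reisner ideal has no generators and equals the zero ideal, giving $\Delta_{i+1}=\Delta_i\setminus w^{-1}(I_{\Delta_i})=\emptyset\setminus w^{-1}(0)=\emptyset$ for all $i\geq 2$. Hence $\mathcal{F}_{(\Delta_1,w)}=\{\emptyset\subseteq\dots\subseteq\emptyset=\Delta_2\subseteq\Delta_1\}$, as claimed. There is no real obstacle in this argument; the only points that need a little care are the bookkeeping at the empty complex (checking $I_\emptyset=0$ so the iteration genuinely freezes) and noting explicitly that the hypothesis $n\geq 3$ is precisely what forces $\deg\bigl(\prod_{j\neq i}x_j\bigr)\geq 2$.
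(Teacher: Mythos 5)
Your proof is correct and follows essentially the same route as the paper's: identify $I_{\Delta_1}=(x_ix_j\mid 1\leq i<j\leq n)$, use $n\geq 3$ to see each $w(\{x_i\})=\prod_{j\neq i}x_j$ lies in $I_{\Delta_1}$, conclude $\Delta_2=\emptyset$, and note the filtration then freezes. The only difference is that you spell out the degree count and the stabilization at the empty complex, which the paper leaves as "clear."
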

\begin{proof}
Note that $I_{\Delta_1}=(x_ix_j\mid 1\leq i<j\leq n)$. Since $n\geq 3$, we observe that $w(\{x_i\})\in I_{\Delta_1}$ for all $i$. Thus $\Delta_2=\Delta_1\setminus w^{-1}(I_{\Delta_1})=\emptyset$. Subsequently, it is clear that $\Delta_i=\emptyset$ for $i\geq 2$.
\end{proof}

Similarly, Proposition \ref{prop:isolatedvertices} can be stated in the language of weighted persistent homology.

\begin{prop}
Let $(\mathcal{K},w)=\{(K^i,w)\}_{i\geq 0}$ be the weighted filtered complex corresponding to $\mathcal{F}_{(\Delta_1,w)}=\{\emptyset=\Delta_2\subseteq\Delta_1\}$, where $(\Delta_1,w)$ is the WSC satisfying the conditions in Proposition \ref{prop:isolatedvertices}.

Then $H_k^{0,1}(\mathcal{K},w)=0$ and
\begin{equation*}
H_k^{1,0}(\mathcal{K},w)=
\begin{cases}
\Z^n &\text{if $k=0$},\\
0 &\text{if $k\geq 1$}.
\end{cases}
\end{equation*}
\end{prop}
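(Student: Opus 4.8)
The plan is to unwind the definitions of the two persistent homology groups and reduce everything to the (elementary) weighted homology of the complexes $K^0$ and $K^1$ in the renumbered filtration. By Proposition~\ref{prop:isolatedvertices}, the Stanley-Reisner filtration of $(\Delta_1,w)$ is $\{\emptyset=\Delta_2\subseteq\Delta_1\}$, so after renumbering $K^0=\Delta_2=\emptyset$ and $K^i=\Delta_1$ for all $i\geq 1$; in particular $K^1=\Delta_1$. Thus the only weighted chain complexes that enter the computation are $\mathscr{C}(\emptyset,w)$, which is zero in every degree, and $\mathscr{C}(\Delta_1,w)$.

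First I would compute $H_k(\Delta_1,w)$. Since $\Delta_1$ consists of $n$ isolated vertices it is $0$-dimensional, so $C_0^1$ is free of rank $n$ while $C_k^1=0$ for $k\geq 1$. The weighted boundary map $\partial_0^1$ maps into $C_{-1}^1=0$, and there are no $1$-simplices, so $Z_0^1=C_0^1\cong\Z^n$ and $B_0^1=0$; hence $H_0(\Delta_1,w)\cong\Z^n$ and $H_k(\Delta_1,w)=0$ for $k\geq 1$. (The distinct weights $w(\{x_i\})=x_1\cdots x_n/x_i$ are invisible to the boundary map here precisely because there are no higher-dimensional simplices, so the weighted homology of $\Delta_1$ coincides with its ordinary homology.)

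Next I would substitute these into the persistent groups. For $H_k^{0,1}(\mathcal{K},w)=Z_k^0/(B_k^{1}\cap Z_k^0)$, the equality $K^0=\emptyset$ forces $C_k^0=0$, hence $Z_k^0=0$, so $H_k^{0,1}(\mathcal{K},w)=0$ for every $k$. For $H_k^{1,0}(\mathcal{K},w)=Z_k^1/(B_k^{1}\cap Z_k^1)$, one uses $B_k^1\subseteq Z_k^1$, so this quotient is $Z_k^1/B_k^1=H_k^1=H_k(\Delta_1,w)$, and the displayed values follow from the preceding paragraph.

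There is no real obstacle: the substantive step — that the Stanley-Reisner filtration collapses to $\{\emptyset\subseteq\Delta_1\}$ — is already contained in Proposition~\ref{prop:isolatedvertices}, and what remains is a translation into the language of persistent homology. The only points worth stating carefully are the identification $K^0=\emptyset$ after the renumbering and the remark that a $0$-dimensional weighted complex has vanishing boundary maps, so that its weighted homology is free of rank equal to the number of vertices.
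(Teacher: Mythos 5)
Your proposal is correct and follows essentially the same route as the paper: both arguments observe that $K^0=\emptyset$ forces $Z_k^0=0$, hence $H_k^{0,1}=0$, and that $H_k^{1,0}=Z_k^1/(B_k^1\cap Z_k^1)$ reduces to $H_k(\Delta_1,w)$, which is $\Z^n$ in degree $0$ and vanishes in higher degrees because $\Delta_1$ is a set of $n$ isolated vertices. The only difference is that you spell out why the $0$-dimensionality makes the weighted boundary maps trivial, a detail the paper leaves implicit.
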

\begin{proof}
We have $H_k^{0,1}(\mathcal{K},w)=Z_k^0/(B_k^1\cap Z_k^0)=0$ since $K^0=\emptyset$ and thus $Z_k^0=0$.

We also have $H_k^{1,0}(\mathcal{K},w)=Z_k^1/(B_k^1\cap Z_k^1)=H_k(\Delta_1,w)$. Since $\Delta_1$ consists of $n$ isolated vertices, thus $H_k(\Delta_1,w)=\Z^n$ if  $k=0$ and $H_k(\Delta_1,w)=0$ if $k\geq 1$.
\end{proof}

Propositions \ref{prop:stanleytrivial} and \ref{prop:isolatedvertices} show that the Stanley-Reisner filtration can distingush between WSCs (with suitably chosen weights). In turn, weighted persistent homology is a possible tool to study the Stanley-Reisner filtration. In our brief discussion, we show that there is some promise in applying weighted persistent homology to study algebraic geometry / combinatorial commutative algebra through the connection with Stanley-Reisner theory.
%%%%%
\section{Weighted Homology over a field $\mathbb{F}$ with weight function $w:K\to\mathbb{F}$}
\label{sec:field}
In this section, let $K$ be a finite simplicial complex. Assume that both the coefficient ring $R$ and the codomain of the weight function $w:K\to R$ are the same field $R=\mathbb{F}$. We prove that the weighted homology groups $H_n(K,w;\mathbb{F})$ are isomorphic to the corresponding unweighted homology groups $H_n(K;\mathbb{F})$, for all $n$ and for all WSCs $(K,w:K\to R\setminus\{0\})$ where all weights of simplices are nonzero. The weighted homology groups may have different generators from the unweighted homology groups. 

The question may arise -- if the two homology theories are isomorphic, why consider weighted homology in this case? The key point is that the result in this section shows that two homology theories are isomorphic when the coefficient ring $R$ and the codomain of the weight function $w:K\to R$ are the same field $R=\mathbb{F}$. If they are different, for instance the weight function is $w:K\to\Z$ while the coefficient ring is $\Z/r\Z$, there could be difference in the two homology theories. This is discussed in Section \ref{sec:bockstein} on the Bockstein Spectral Sequence, in particular in Remark \ref{remark:basicmodular}.

Firstly, note that if $R=\mathbb{F}$ is a field, then the chain groups $C_n(K,w)$ are free $\mathbb{F}$-modules, or in other words vector spaces over $\mathbb{F}$. Then the kernel and image of the weighted boundary map, $\ker\partial_n$ and $\Ima \partial_n$ respectively, are also vector subspaces over $\mathbb{F}$.
\begin{lemma}
\label{kerneleq}
Let $(K,w)$ be a WSC with all weights of simplices nonzero. Let $\partial^w: C_n(K,w)\to C_{n-1}(K,w)$ and $\partial: C_n(K)\to C_{n-1}(K)$ denote the weighted boundary map and the usual unweighted boundary map respectively. Then $\ker\partial\cong\ker\partial^w$ as $\mathbb{F}$-vector spaces.
\end{lemma}
\begin{proof}
We consider the map $\psi: \ker\partial\to\ker\partial^w$, \[\psi\left(\sum_{i=1}^m a_i\sigma_i\right)=\sum_{i=1}^m\frac{a_i}{w(\sigma_i)}\sigma_i,\]
where $a_i\in\mathbb{F}$, and $\sigma_i$ are distinct basis elements of $C_n(K)$. 

The crucial part of the proof is to verify that $\sum_{i=1}^m\frac{a_i}{w(a_i)}\sigma_i\in\ker\partial^w$. Since \[\partial(\sum_{i=1}^ma_i\sigma_i)=\sum_{i=1}^m a_i\sum_{j=0}^n (-1)^jd_j(\sigma_i)=0,\] for each fixed basis element $d_k(\sigma_l)\in C_{n-1}(K)$, its coefficients must sum up to zero. That is,
\begin{equation*}
\sum_{\{i,j\mid d_j(\sigma_i)=d_k(\sigma_l)\}}a_i(-1)^j=0.
\end{equation*}

Note that
\begin{align}
\partial^w(\sum_{i=1}^m\frac{a_i}{w(\sigma_i)}\sigma_i)&=\sum_{i=1}^m\frac{a_i}{w(\sigma_i)}\partial^w(\sigma_i)\\
&=\sum_{i=1}^m\frac{a_i}{w(\sigma_i)}\left(\sum_{j=0}^n\frac{w(\sigma_i)}{w(d_j(\sigma_i))}(-1)^jd_j(\sigma_i)\right)\\
&=\sum_{i=1}^m a_i\sum_{j=0}^n\frac{1}{w(d_j(\sigma_i))}(-1)^jd_j(\sigma_i). \label{eq:cruciallastline}
\end{align}

Then for each basis element $d_k(\sigma_l)$ in the expression (\ref{eq:cruciallastline}), its coefficients sum up to 
\begin{align*}
&\sum_{\{i,j\mid d_j(\sigma_i)=d_k(\sigma_l)\}}\frac{a_i}{w(d_j(\sigma_i))}(-1)^j\\
&=\frac{1}{w(d_k(\sigma_l))}\sum_{\{i,j\mid d_j(\sigma_i)=d_k(\sigma_l)\}}a_i(-1)^j\\
&=0.
\end{align*}
Hence, $\partial^w(\sum_{i=1}^m \frac{a_i}{w(\sigma_i)}\sigma_i)=0$.

The map $\psi$ is clearly linear. Since $a_i\in\mathbb{F}$ and $w(\sigma_i)\in\mathbb{F}\setminus\{0\}$, hence $\frac{a_i}{w(\sigma_i)}\in\mathbb{F}$. If $\psi(\sum_{i=1}^m a_i\sigma_i)=\sum_{i=1}^m\frac{a_i}{w(\sigma_i)}\sigma_i=0$, then since the $\sigma_i$ are distinct basis elements of $C_n(K)$, thus $\frac{a_i}{w(\sigma_i)}=0$ for all $i$. Hence $a_i=0$ for all $i$, and $\sum_{i=1}^m a_i\sigma_i=0$. Hence $\psi$ is injective. For surjectivity, we observe that any $\sum_{i=1}^m b_i\sigma_i\in\ker\partial^w$ can be written in the form $\sum_{i=1}^m\frac{a_i}{w(\sigma_i)}$ by setting $a_i=b_iw(\sigma_i)$, where we can similarly check\footnote{The proof that $\sum_{i=1}^ma_i\sigma_i$ indeed lies in $\ker\partial$ is similar to the part where we verify that $\sum_{i=1}^m\frac{a_i}{w(a_i)}\sigma_i\in\ker\partial^w$.}  that $\sum_{i=1}^ma_i\sigma_i$ indeed lies in $\ker\partial$.

Therefore, we have shown that $\psi$ is a vector space isomorphism.
\end{proof}
\begin{lemma}
\label{imageeq}
Let $(K,w)$ be a WSC with all weights of simplices nonzero. Let $\partial^w: C_n(K,w)\to C_{n-1}(K,w)$ and $\partial: C_n(K)\to C_{n-1}(K)$ denote the weighted boundary map and the usual unweighted boundary map respectively. Then $\Ima\partial\cong\Ima\partial^w$ as $\mathbb{F}$-vector spaces.
\end{lemma}
\begin{proof}
Consider the map $\phi:\Ima\partial\to\Ima\partial^w$ by defining
\begin{equation*}
\phi\left(\sum_{i=0}^n (-1)^i d_i(\sigma)\right)=\sum_{i=0}^n\frac{w(\sigma)}{w(d_i(\sigma))}(-1)^id_i(\sigma),
\end{equation*}
where $\sigma\in C_n(K)$, and extending linearly over $\mathbb{F}$.

Surjectivity is clear. Let $\sigma_j\in C_n(K)$ for $j=1,\dots,m$. If
\begin{align*}
\phi(\partial(\sigma_1)+\dots+\partial(\sigma_j))&=\partial^w(\sigma_1)+\dots+\partial^w(\sigma_j)\\
&=\partial^w(\sigma_1+\dots+\sigma_j)\\
&=0,
\end{align*}
then $\sigma_1+\dots+\sigma_j\in\ker\partial^w=\ker\partial$ by Lemma \ref{kerneleq}. Thus, $\partial(\sigma_1)+\dots+\partial(\sigma_j)=\partial(\sigma_1+\dots+\sigma_j)=0$. Hence $\ker\phi=0$ and hence $\phi$ is injective. We have shown that $\phi$ is an isomorphism.
\end{proof}
\begin{theorem}
\label{thm:fieldisom}
Let $(K,w)$ be a finite (or finite-type\footnote{A WSC $(K,w)$ is said to be of finite-type if for each $n$, the number of $n$-simplices in $K$ is finite.}) WSC with all weights of simplices nonzero, and let $\mathbb{F}$ be a field. Then $H_n(K,w;\mathbb{F})\cong H_n(K;\mathbb{F})$.
\end{theorem}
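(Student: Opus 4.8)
The plan is to leverage the two isomorphisms already in hand—$\ker\partial\cong\ker\partial_w$ (Lemma~\ref{kerneleq}) and $\Ima\partial\cong\Ima\partial_w$ (Lemma~\ref{imageeq})—together with the fact that over a field a homology group is completely determined by a dimension count. Write $Z_n^w=\ker\partial_w$ and $B_n^w=\Ima\partial_w$ for the weighted cycles and boundaries, and $Z_n$, $B_n$ for the unweighted ones. The finite (or finite-type) hypothesis guarantees that each $C_n(K)$, and hence each of $Z_n^w$, $B_n^w$, $Z_n$, $B_n$, is a finite-dimensional $\mathbb{F}$-vector space. Applying Lemma~\ref{kerneleq} in degree $n$ gives $\dim Z_n^w=\dim Z_n$, and applying Lemma~\ref{imageeq} in degree $n+1$ gives $\dim B_n^w=\dim B_n$.

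Next I would invoke the dimension formula: for a subspace $B\subseteq Z$ of finite-dimensional $\mathbb{F}$-vector spaces, $\dim(Z/B)=\dim Z-\dim B$. Since $\partial_w\circ\partial_w=0$ (this is built into the definition of weighted homology), we have $B_n^w\subseteq Z_n^w$, and likewise $B_n\subseteq Z_n$ in the unweighted case, so the formula applies in both settings. Therefore
\[
\dim H_n(K,w;\mathbb{F})=\dim Z_n^w-\dim B_n^w=\dim Z_n-\dim B_n=\dim H_n(K;\mathbb{F}),
\]
and two finite-dimensional vector spaces over $\mathbb{F}$ of equal dimension are isomorphic. This proves the theorem.

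The one point requiring care is that the isomorphisms $\psi$ of Lemma~\ref{kerneleq} and $\phi$ of Lemma~\ref{imageeq} are not a priori compatible: a short computation shows $\phi(\partial\sigma)=w(\sigma)\,\psi(\partial\sigma)$ on each simplex $\sigma$, so the scalar $w(\sigma)$ obstructs assembling $\phi$ and $\psi$ into a commutative square and passing directly to the quotient. This is exactly why I would route the argument through dimensions rather than through a diagram chase. I expect the only genuine obstacle to be bookkeeping around finiteness (ensuring all the relevant spaces are finite-dimensional so the dimension formula applies), which the finite/finite-type hypothesis is designed to handle.

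As an alternative—arguably the cleanest route—one can bypass the two lemmas entirely: define $\Phi_n\colon C_n(K)\to C_n(K,w)$ by $\Phi_n(\sigma)=\frac{1}{w(\sigma)}\sigma$ on each $n$-simplex and extend $\mathbb{F}$-linearly. This is a degreewise isomorphism (a diagonal map with nonzero entries), and a one-line check gives $\partial_w\Phi_n=\Phi_{n-1}\partial_n$, so $\Phi$ is a chain isomorphism and induces $H_n(K;\mathbb{F})\cong H_n(K,w;\mathbb{F})$ directly, recovering Lemmas~\ref{kerneleq} and~\ref{imageeq} as byproducts. Either formulation works; both are routine once the finiteness is in place.
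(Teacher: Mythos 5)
Your primary argument is exactly the paper's proof: it applies Lemma~\ref{kerneleq} and Lemma~\ref{imageeq} to equate $\dim\ker\partial_{w,n}=\dim\ker\partial_n$ and $\dim\Ima\partial_{w,n+1}=\dim\Ima\partial_{n+1}$, then uses the rank formula and finite-dimensionality to conclude $\dim H_n(K,w;\mathbb{F})=\dim H_n(K;\mathbb{F})$, so the proposal is correct and matches the paper. Your observation that $\phi$ and $\psi$ do not assemble into a commutative square (because of the factor $w(\sigma)$) correctly identifies why the paper also routes through dimensions rather than a quotient map. Your alternative, $\Phi_n(\sigma)=\frac{1}{w(\sigma)}\sigma$, does satisfy $\partial_w\Phi_n=\Phi_{n-1}\partial_n$ (a one-line check against Definition~\ref{boundary}) and is a degreewise isomorphism since all weights are nonzero in the field $\mathbb{F}$; this chain-isomorphism argument is genuinely different from, and stronger than, the paper's proof, as it produces a natural isomorphism on homology and dispenses with the finiteness hypothesis entirely, whereas the dimension-count argument only yields an abstract isomorphism and needs $K$ finite or of finite type.
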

\begin{proof}
Let $\partial^w_n$ and $\partial_n$ denote the $n$th weighted and unweighted boundary maps respectively. Considering the dimension over $\mathbb{F}$, we have
\begin{align*}
\dim H_n(K,w;\mathbb{F})&=\dim(\ker\partial^w_n/\Ima \partial^w_{n+1})\\
&=\dim(\ker\partial^w_n)-\dim(\Ima\partial^w_{n+1})\\
&=\dim(\ker\partial_n)-\dim(\Ima\partial_{n+1}) \tag{by Lemma \ref{kerneleq} and \ref{imageeq}}\\
&=\dim(\ker\partial_n/\Ima\partial_{n+1})\\
&=\dim H_n(K;\mathbb{F}).
\end{align*}

In the above computations, all dimensions are finite since $K$ is a finite (or finite-type) simplicial complex. Hence $H_n(K,w;\mathbb{F})\cong H_n(K;\mathbb{F})$ as $\mathbb{F}$-vector spaces.
\end{proof}

\begin{comment}
Next, we prove that Theorem \ref{thm:fieldisom} is functorial in the sense that the ranks of induced maps coincide for weighted and unweighted homology. The induced homomorphism for weighted homology is introduced in \cite[p.~2673]{ren2018weighted}.

\begin{theorem}
Let $(K,w_K)$ and $(L,w_L)$ be WSCs with all weights of simplices nonzero, and let $\F$ be a field. Let $f:K\to L$ be a simplicial map. Let $f_*:H_n(K;\F)\to H_n(L;\F)$ and $f^w_*: H_n(K,w_K;\F)\to H_n(L,w_L;\F)$ denote the respective homomorphisms induced by $f$. Then, we have $\Ima f_*\cong\Ima f^w_*$.
\end{theorem}
\begin{proof}
\end{proof}
\end{comment}

We also prove that if the weighted and unweighted homology are isomorphic for all WSCs $(K,w)$, then $R$ must be a field. Recall that the definition of weighted homology requires that $R$ is an integral domain with 1. (In particular, we do not consider $R=0$.)
\begin{theorem}
Let $R$ be an integral domain with 1. If $H_n(K,w;R)\cong H_n(K;R)$ for all WSCs $(K,w)$ and for all $n$, then $R$ is a field.
\end{theorem}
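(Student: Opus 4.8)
The plan is to argue by contrapositive: assuming $R$ is an integral domain with $1$ that is \emph{not} a field, I will exhibit a single WSC $(K,w)$ with all weights nonzero and a degree $n$ for which $H_n(K,w;R)\not\cong H_n(K;R)$. Since $R$ is a domain with $1\ne 0$ and is not a field, there is an element $a\in R$ that is nonzero and not a unit; equivalently $aR$ is a proper nonzero ideal, so $R/aR\ne 0$.

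For the complex I would take $K$ to be a single edge: vertices $v_0,v_1$ and the $1$-simplex $e=[v_0,v_1]$, with weight function $w(v_0)=w(v_1)=1$ and $w(e)=a$. The divisibility condition of Definition \ref{divcond} holds because $1\mid a$. By Definition \ref{boundary} the weighted boundary map is $\partial_1(e)=\frac{w(e)}{w(v_1)}v_1-\frac{w(e)}{w(v_0)}v_0=a(v_1-v_0)$, while the unweighted boundary of $e$ is $v_1-v_0$.

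Next I would compute homology in degree $0$. Changing basis of $C_0(K)=Rv_0\oplus Rv_1$ to $\{v_0,\ v_1-v_0\}$, the weighted boundary group is $\Ima\partial_1=aR\cdot(v_1-v_0)$, so $H_0(K,w;R)\cong R\oplus R/aR$; the unweighted boundary group is $R\cdot(v_1-v_0)$, so $H_0(K;R)\cong R$. In degree $1$ we have $\ker\partial_1=0$ since $R$ is a domain and $a\ne 0$, so there is no discrepancy there; it is the degree-$0$ group that matters. Finally I would observe that $R\oplus R/aR$ and $R$ are not isomorphic as $R$-modules: the class of $1$ in $R/aR$ is a nonzero element (as $a$ is not a unit, $1\notin aR$) annihilated by $a\ne 0$, so $R\oplus R/aR$ has nonzero torsion, whereas $R$, being an integral domain, is torsion-free as a module over itself. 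This contradicts $H_0(K,w;R)\cong H_0(K;R)$, completing the argument.

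There is no serious obstacle here. The only point needing a little care is the last step---ruling out the isomorphism $R\oplus R/aR\cong R$---which is handled cleanly by the torsion observation, using precisely that $R$ is a domain and that $a$ is a nonzero non-unit (so that $R/aR$ is a nonzero torsion module).
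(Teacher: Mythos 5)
Your proposal is correct and follows essentially the same strategy as the paper: assume $R$ is not a field, pick a nonzero non-unit $a$, build a small WSC whose weighted $H_0$ is $R\oplus R/(a)$ while the unweighted $H_0$ is $R$, and conclude these are not isomorphic. The only difference is cosmetic --- you use a single edge of weight $a$ where the paper uses a two-edge path with $w(y)=w([x,y])=w([y,z])=a$ --- and your explicit torsion argument for $R\oplus R/aR\not\cong R$ is a welcome justification of a step the paper leaves implicit.
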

\begin{proof}
Suppose that $R$ is not a field. Let $a\in R$ be a nonzero non-unit so that $(a)\neq R$. 
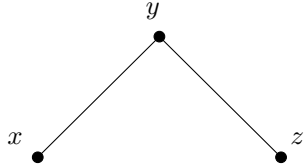
\begin{figure}[htbp]
\begin{center}
\begin{tikzpicture}[scale=0.8]
\draw (4.35,2.56) node[anchor=north west] {$x$};
\draw (6.62,4.72) node[anchor=north west] {$y$};
\draw (9.0,2.56) node[anchor=north west] {$z$};
\draw (5.,2.)-- (7.,4.);
\draw (9.,2.)-- (7.,4.);
\begin{scriptsize}
\draw [fill=black] (5.,2.) circle (2.5pt);
\draw [fill=black] (7.,4.) circle (2.5pt);
\draw [fill=black] (9.,2.) circle (2.5pt);
\end{scriptsize}
\end{tikzpicture}
\caption{Simplicial complex with 3 vertices $x$, $y$, $z$.}
\label{fig:torsioneg}
\end{center}
\end{figure}
Consider the WSC $(K,w)$ shown in Figure \ref{fig:torsioneg}, where $w(x)=1$, $w(y)=a$, $w(z)=1$, $w([x,y])=a$ and $w([y,z])=a$. 
Then
\begin{align*}
\partial_1([x,y])&=\frac{w([x,y])}{w(y)}y-\frac{w([x,y])}{w(x)}x\\
&=y-ax.
\end{align*}
Similarly, $\partial_1([y,z])=az-y$. Thus
\begin{align*}
H_0(K,w)&=\ker\partial_0/\Ima\partial_1\\
&\cong\langle x,y,z\mid y=ax,y=az\rangle\\
&\cong\langle x,z\mid ax=az\rangle\\
&\cong\langle x,x-z\mid a(x-z)=0\rangle\\
&\cong R\oplus R/(a)\not\cong R.
\end{align*}

On the other hand, $H_0(K)\cong R$ since $K$ is path-connected.
\end{proof}
%%%%%
\section{Bockstein Spectral Sequence and Weighted Persistent Homology}
\label{sec:bockstein}
Both spectral sequences and persistent homology are related to filtrations, hence it is natural to explore the relationship between them. In \cite{basu2017spectral}, Basu and Parida derived formulas which expresses the relationship between spectral sequences and persistent homology. In \cite{basu2017spectral}, all homology groups are taken with coefficients in a field. In \cite{romero2014defining}, Romero et al.\ study persistent $\mathbb{Z}$-homology using spectral sequences. We refer the reader to \cite{mccleary2001user,may2011more, chow2006you} for an overview of spectral sequences.

In this section we consider the Bockstein spectral sequence applied to weighted homology and weighted persistent homology. We will give a brief introduction to the Bockstein spectral sequence and refer the reader to \cite[ch.~10]{mccleary2001user}, \cite[ch.~24]{may2011more}, \cite[ch.~7]{neisendorfer2010algebraic} for more details. The motivation behind using the Bockstein spectral sequence is that in persistent homology algorithms \cite{Zomorodian2005,boissonnat2014computing}, most of the time the homology is computed with field coefficients, for instance $\mathbb{Z}/2\mathbb{Z}$. However, the integral homology groups contain more information than the homology groups with field coefficients. The Bockstein spectral sequence allows us to ``unravel'' the integral homology from the mod $p$ homology. Since the standard unweighted homology is a special case of weighted homology, the below results also hold for unweighted homology.
\subsection{Bockstein Homomorphism for Weighted Homology}
\label{ssec:bocksteinhom}
Recall the following results from \cite[p.~455]{mccleary2001user}, which we adapt to the context of weighted homology. Consider the short exact sequence of coefficient rings where $\red_r$ is reduction mod $r$:
\begin{equation*}
0\to\mathbb{Z}\xrightarrow{-\times r}\mathbb{Z}\xrightarrow{\red_r}\mathbb{Z}/r\mathbb{Z}\to 0.
\end{equation*}

The chain complex of a WSC $(K,w)$ with integer coefficients and weight function $w: K\to\mathbb{Z}$ is a complex, $C_*(K,w)$, of free abelian groups. It is clear that the maps $-\times r$ and $\red_r$ commute with the weighted boundary maps $\partial_n$. Hence, we obtain another short exact sequence of chain complexes (with integer coefficients)
\begin{equation}
\label{eq:shortexactchain}
0\to C_*(K,w)\xrightarrow{-\times r}C_*(K,w)\xrightarrow{\red_r}C_*(K,w)\otimes\mathbb{Z}/r\mathbb{Z}\to 0.
\end{equation}
Here, $\red_r$ is defined by $\red_r(c)=c\otimes 1$ for $c\in C_n(K,w)$.

\begin{lemma}
\label{lemma:longexact}
The short exact sequence of chain complexes (\ref{eq:shortexactchain}) induces a long exact sequence of homology groups:
\begin{equation*}
\begin{split}
&\dots\to H_{n+1}(K,w;\mathbb{Z}/r\mathbb{Z})\xrightarrow{\partial} H_n(K,w)\xrightarrow{-\times r}H_n (K,w)\xrightarrow{{\red_r}_*}H_n (K,w;\mathbb{Z}/r\mathbb{Z})\\&\xrightarrow{\partial} H_{n-1}(K,w)\to\dots
\end{split}
\end{equation*}
where
\begin{equation*}
H_n(K,w;\mathbb{Z}/r\mathbb{Z}):=\ker(\partial_n\otimes 1)/\Ima(\partial_{n+1}\otimes 1)
\end{equation*} and
\begin{equation*}
\partial_n\otimes 1: C_n(K,w)\otimes\mathbb{Z}/r\mathbb{Z}\to C_{n-1}(K,w)\otimes\mathbb{Z}/r\mathbb{Z}.
\end{equation*}
\end{lemma}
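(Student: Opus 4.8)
The plan is to recognize Lemma~\ref{lemma:longexact} as a direct instance of the Zig-zag Lemma (Lemma~\ref{zigzag}) applied to the short exact sequence of chain complexes~(\ref{eq:shortexactchain}), so the bulk of the work is \emph{not} re-proving exactness of the long sequence but rather checking that~(\ref{eq:shortexactchain}) really is a short exact sequence of chain complexes and that the homology groups of the three complexes are the ones named in the statement. Accordingly, I would organize the proof in three steps.

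First I would verify that $-\times r$ and $\red_r$ are chain maps, i.e.\ that they commute with the weighted boundary operator $\partial_n$. Since $\partial_n$ is $\mathbb{Z}$-linear (it is a signed sum of face maps with integer coefficients $w(\sigma)/w(d_i(\sigma))$, which lie in $\mathbb{Z}$ because of the divisibility condition in Definition~\ref{divcond}), multiplication by $r$ on $C_*(K,w)$ obviously commutes with $\partial_n$; and $\red_r$ commutes with $\partial_n$ because $\partial_n\otimes 1$ is by definition the induced map on $C_*(K,w)\otimes\mathbb{Z}/r\mathbb{Z}$. Next I would check exactness at each of the three spots: injectivity of $-\times r$ follows since $C_n(K,w)$ is a free abelian group (hence torsion-free); surjectivity of $\red_r$ is immediate from its definition $\red_r(c)=c\otimes 1$; and exactness in the middle, $\Ima(-\times r)=\ker(\red_r)$, is precisely the statement that $C_n(K,w)\otimes\mathbb{Z}/r\mathbb{Z}\cong C_n(K,w)/rC_n(K,w)$, which holds because tensoring the free module $C_n(K,w)$ with the exact sequence $0\to\mathbb{Z}\xrightarrow{-\times r}\mathbb{Z}\xrightarrow{\red_r}\mathbb{Z}/r\mathbb{Z}\to 0$ stays exact.

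Second, I would identify the homology of the complex $C_*(K,w)\otimes\mathbb{Z}/r\mathbb{Z}$ with the boundary map $\partial_n\otimes 1$ as $H_n(K,w;\mathbb{Z}/r\mathbb{Z}):=\ker(\partial_n\otimes 1)/\Ima(\partial_{n+1}\otimes 1)$, which is simply the definition adopted in the statement, and note that the homology of $C_*(K,w)$ with integer coefficients and boundary $\partial_n$ is $H_n(K,w)$ by the earlier definition of weighted homology. Third, I would invoke Lemma~\ref{zigzag} with $\mathscr{C}=\mathscr{E}=C_*(K,w)$, $\mathscr{D}=C_*(K,w)$, wait---more precisely with $\mathscr{C}=C_*(K,w)$, $\mathscr{D}=C_*(K,w)$, $\mathscr{E}=C_*(K,w)\otimes\mathbb{Z}/r\mathbb{Z}$, $\phi=-\times r$, $\psi=\red_r$, to obtain the long exact sequence; the connecting map $\partial_*$ of the Zig-zag Lemma is, by construction, the Bockstein-type boundary $\partial$ appearing in the statement, and the maps $\phi_*$, $\psi_*$ become $-\times r$ and ${\red_r}_*$ on homology.

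I do not expect a genuine obstacle here; the only point requiring a moment's care is the exactness of~(\ref{eq:shortexactchain}) in the middle degree, which rests on the fact that $C_n(K,w)$ is \emph{free} abelian (so that $-\otimes\mathbb{Z}/r\mathbb{Z}$ is exact on it), and on the compatibility of the weighted boundary maps with $-\times r$ and $\red_r$, which in turn relies on the integrality of the coefficients $w(\sigma)/w(d_i(\sigma))$ guaranteed by the divisibility condition. Once these are in place, the lemma is an immediate corollary of Lemma~\ref{zigzag}.
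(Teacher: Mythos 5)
Your proposal is correct and follows essentially the same route as the paper: the paper's proof simply cites the standard derivation of the long exact homology sequence from the short exact sequence (\ref{eq:shortexactchain}) via a double application of the Snake Lemma, which is exactly the content of the Zig-zag Lemma you invoke, and the preliminary checks you carry out (integrality of the coefficients $w(\sigma)/w(d_i(\sigma))$, freeness of $C_n(K,w)$, exactness of (\ref{eq:shortexactchain})) are the same facts the paper records in the paragraph preceding the lemma. Your more careful verification of these points is a welcome expansion rather than a deviation.
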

\begin{proof}
The proof is a standard double application of the Snake Lemma. See for instance \cite[pp.~121--122]{hilton2012course}.
\end{proof}
\begin{remark}
\label{remark:basicmodular}
Note that in general $H_n(K,w;\mathbb{Z}/r\mathbb{Z})$ with weight function $w: K\to\mathbb{Z}$ is different from $H_n(K,w';\mathbb{Z}/r\mathbb{Z})$ where $w': K\to\mathbb{Z}/r\mathbb{Z}$ is defined by $w'(\sigma)=w(\sigma) \pmod r$. This can be seen from the simple case of the 0-simplex $K=\{v_0\}$ with $w(v_0)=r$. Then $H_0(K,w;\mathbb{Z}/r\mathbb{Z})=\mathbb{Z}/r\mathbb{Z}$, while $H_0(K,w';\mathbb{Z}/r\mathbb{Z})=0$. This is because $C_0(K,w')=0$ due to the fact that $w'(v_0)=r\pmod r=0\pmod r$. However in the case that $w(\sigma)\neq 0\pmod r$ for all $\sigma\in K$, then $H_n(K,w;\mathbb{Z}/r\mathbb{Z})\cong H_n(K,w';\mathbb{Z}/r\mathbb{Z})$. This can be seen by observing that the boundary maps and chain groups in both cases are the same due to basic modular arithmetic.
\end{remark}
\begin{remark}
\label{remark:connecth}
We apply the construction of the connecting homomorphism in \cite[p.~99]{hilton2012course} to the situation in Lemma \ref{lemma:longexact}. Let $[c\otimes 1]\in H_{n+1}(K,w;\mathbb{Z}/r\mathbb{Z})$, where $c\in C_{n+1}(K,w)$. There exists $a\in\ker\partial_n$ such that $\partial_{n+1}(c)=ra$. Note that $a$ is unique by injectivity of the map $-\times r$. Then $\partial$ is defined by $\partial([c\otimes 1])=[a]\in H_n(K,w)$.
\end{remark}

If an element $u\in H_{n-1}(K,w)$ satisfies $ru=0$, i.e.\ $u\in\ker(-\times r)$, then by exactness $u\in\Ima(\partial)$. Hence there exists an element $\bar{u}\in H_n(K,w;\mathbb{Z}/r\mathbb{Z})$ such that $\partial(\bar{u})=u$. We write $\bar{u}=[c\otimes 1]\in H_n(K,w;\mathbb{Z}/r\mathbb{Z})$. Since $(\partial_n\otimes 1)(c\otimes 1)=0$, we conclude that $\partial_n(c)=rv$ for some $v\in\ker\partial_{n-1}$. By definition of the connecting homomorphism (see Remark \ref{remark:connecth}), $\partial$ takes $\bar{u}$ to $[v]\in H_{n-1}(K,w)$.
\begin{defn}[Bockstein homomorphism for weighted homology]
The \emph{Bockstein homomorphism} for weighted homology is defined by
\begin{equation*}
\begin{split}
\beta: H_n(K,w;\mathbb{Z}/r\mathbb{Z}) &\to H_{n-1}(K,w;\mathbb{Z}/r\mathbb{Z})\\
\bar{u}=[c\otimes 1] &\mapsto [v\otimes 1]=[\frac{1}{r}\partial_n c\otimes 1].
\end{split}
\end{equation*}
\end{defn}

The Bockstein spectral sequence is obtained from the long exact sequence in Lemma \ref{lemma:longexact} when we view it as an exact couple.
\subsection{The Bockstein Spectral Sequence for Weighted Homology}
Let $p$ be a prime number. Similar to the previous subsection \ref{ssec:bocksteinhom}, we can construct a long exact sequence associated to the short exact sequence of coefficients,
\begin{equation*}
0\to\mathbb{Z}\xrightarrow{-\times p}\mathbb{Z}\xrightarrow{\red_p}\mathbb{Z}/p\mathbb{Z}\to 0.
\end{equation*}

Notice that in the long exact sequence (Lemma \ref{lemma:longexact}), two out of every three terms is the same. Hence, we can interpret the long exact sequence as an exact couple \cite{browder1961torsion,mccleary2001user}:
\begin{equation*}
\begin{tikzcd}
H_*(K,w) \arrow[rr,"-\times p"] & & H_*(K,w) \arrow[dl,"{\red_p}_*"]\\
& H_*(K,w;\mathbb{Z}/p\mathbb{Z}) \arrow[ul,"\partial"] &
\end{tikzcd}
\end{equation*}

We define the $E^1$-term to be $E_n^1=H_n(K,w; \mathbb{Z}/p\mathbb{Z})$. The first differential is defined to be $d^1={\red_p}_*\circ\partial=\beta$, the Bockstein homomorphism. The resulting Bockstein spectral sequence is singly-graded.

\begin{theorem}
\label{thm:einfinity}
Let $(K,w)$ be a finite (or finite-type) WSC. Then there is a singly-graded spectral sequence $\{E_*^r, d^r\}$, with $E_n^1=H_n(K,w;\mathbb{Z}/p\mathbb{Z})$, $d^1=\beta$, the Bockstein homomorphism, and converging strongly to $(H_*(K,w)/\text{torsion})\otimes(\mathbb{Z}/p\mathbb{Z})$.
\end{theorem}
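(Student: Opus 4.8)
The plan is to recognize that this is the classical Bockstein spectral sequence theorem (see \cite[Ch.~10]{mccleary2001user}, \cite[Ch.~24]{may2011more}) applied verbatim to the chain complex $C_*(K,w;\mathbb{Z})$ of free abelian groups, so the entire argument proceeds through the algebra of exact couples and does not use anything particular to the weighting beyond the fact that $C_*(K,w)$ is a chain complex of \emph{free} abelian groups which is of finite type. The exact couple displayed just before the statement gives a spectral sequence $\{E_*^r, d^r\}$ by the standard derived-couple construction, with $E_n^1 = H_n(K,w;\mathbb{Z}/p\mathbb{Z})$ and $d^1 = {\red_p}_* \circ \partial = \beta$ as already identified in the text; so the only real content is the identification of the $E^\infty$-term.

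First I would set up the derived couples: writing the couple as $(D, E, i, j, k)$ with $D = H_*(K,w)$, $i = -\times p$, $j = {\red_p}_*$, $k = \partial$, the $r$-th derived couple has $D^r = i^{r-1} D = p^{r-1} H_*(K,w) \subseteq H_*(K,w)$ (the image of multiplication by $p^{r-1}$), and $d^r = j \circ i^{-(r-1)} \circ k$ on $E^r$. Since $K$ is finite (or finite type), each $H_n(K,w)$ is a finitely generated abelian group, so by the structure theorem $H_n(K,w) \cong \mathbb{Z}^{b_n} \oplus T_n$ with $T_n$ a finite torsion group; then $p^{r-1} H_n(K,w) \cong p^{r-1}\mathbb{Z}^{b_n} \oplus p^{r-1} T_n$, and for $r$ large enough the torsion part $p^{r-1} T_n$ stabilizes to the $p$-divisible part of $T_n$, which is $0$ (a finite group has no nonzero $p$-divisible subgroup). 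Hence $D^r$ stabilizes to the free part $\mathbb{Z}^{b_n}$ (embedded via high powers of $p$), and from the exact couple formula $E^{r+1} = \ker d^r / \operatorname{im} d^r$ together with $E^{r+1} \cong i^{r} D / (i^{r} D \cap \text{stuff}) \oplus (\text{torsion-killed part})$ one reads off that $E^\infty_n$ is exactly the mod-$p$ reduction of the free part, i.e.\ $(H_n(K,w)/\text{torsion}) \otimes \mathbb{Z}/p\mathbb{Z}$.

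More carefully, the cleanest route is the standard one: show that for each $n$ and each sufficiently large $r$ the differentials $d^r$ into and out of $E_n^r$ vanish, so $E_n^r = E_n^\infty$, and then compute $E_n^\infty$ directly from the exact couple. From exactness, $E_n^r \cong (k^{-1}(\ker i^{r-1}|_D)) / (j(\text{?}))$; unwinding, $E_n^\infty$ fits in a short exact sequence $0 \to (D^\infty_n / p D^\infty_n) \to E^\infty_n \to (\ker i|_{D^\infty_{n-1}}) \to 0$ where $D^\infty = \bigcap_r D^r$ is the free part, on which $i = \times p$ is injective, so the right-hand term vanishes and $E^\infty_n \cong D^\infty_n/pD^\infty_n \cong \mathbb{Z}^{b_n}\otimes\mathbb{Z}/p\mathbb{Z} = (H_n(K,w)/\text{torsion})\otimes\mathbb{Z}/p\mathbb{Z}$. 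Strong convergence is automatic here because the spectral sequence is singly graded and, for each fixed $n$, stabilizes after finitely many pages (finite type again).

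The main obstacle is purely bookkeeping rather than conceptual: one must be careful with the indexing of the derived couples and verify that the identification $D^r \cap (\text{image of } k) $ behaves as claimed, i.e.\ that the torsion of $H_*(K,w)$ is genuinely annihilated in the limit while the free part survives and gets reduced mod $p$. Since none of this interacts with the weighting — the weighted boundary maps $\partial_n$ were already shown (in the text preceding Lemma~\ref{lemma:longexact}) to commute with $-\times p$ and $\red_p$, giving exactly the short exact sequence of chain complexes of free abelian groups that the classical theory requires — I would simply cite \cite[Ch.~10]{mccleary2001user} or \cite[Ch.~24]{may2011more} for the exact-couple computation and note that it applies without change, spelling out only the finite-type hypothesis that guarantees termination on each page.
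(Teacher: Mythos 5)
Your proposal takes essentially the same route as the paper: the paper's entire proof is the single line ``The proof is similar to \cite[Theorem~10.3]{mccleary2001user}'', i.e.\ it invokes the classical exact-couple computation exactly as you do, the only weighted input being that $C_*(K,w)$ is a free finite-type chain complex over $\mathbb{Z}$ on which $-\times p$ and $\red_p$ commute with the weighted boundary. One small slip in your sketch: the stable torsion part $\bigcap_r p^{r-1}T_n$ is not $0$ but the prime-to-$p$ part of $T_n$ (a finite group \emph{can} be $p$-divisible, e.g.\ $\mathbb{Z}/3$ for $p=2$); this is harmless because that summand dies upon reduction mod $p$, so $D^\infty_n/pD^\infty_n\cong(H_n(K,w)/\text{torsion})\otimes\mathbb{Z}/p\mathbb{Z}$ still holds and your conclusion is unaffected.
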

\begin{proof}
The proof is similar to \cite[Theorem~10.3]{mccleary2001user}.
\end{proof}

Following \cite[p.~460]{mccleary2001user}, we present an alternative and more direct presentation of the Bockstein homomorphism. Consider the short exact sequence of coefficients
\begin{equation*}
0\to\mathbb{Z}/p\mathbb{Z}\to\mathbb{Z}/p^2\mathbb{Z}\to\mathbb{Z}/p\mathbb{Z}\to 0.
\end{equation*}
The associated long exact sequence on weighted homology is given by
\begin{equation}
\label{eqn:bocksteinlong2}
\begin{split}
\dots&\to H_n(K,w;\mathbb{Z}/p\mathbb{Z})\xrightarrow{-\times p}H_n(K,w;\mathbb{Z}/p^2\mathbb{Z})\\
&\to H_n(K,w;\mathbb{Z}/p\mathbb{Z})\xrightarrow{\beta}H_{n-1}(K,w;\mathbb{Z}/p\mathbb{Z})\to\dots
\end{split}
\end{equation}
and has $d^1=\beta$, the connecting homomorphism. Similarly, when we consider the short exact sequence of coefficients
\begin{equation*}
0\to\mathbb{Z}/p^r\mathbb{Z}\to\mathbb{Z}/p^{2r}\mathbb{Z}\to\mathbb{Z}/p^r\mathbb{Z}\to 0,
\end{equation*}
we get the \emph{$r$-th order Bockstein operator} as the connecting homomorphism.

By an argument similar to \cite[Prop.~10.4]{mccleary2001user}, we obtain the following theorem.
\begin{theorem}
\label{thm:identifiedimage}
Let $\{E^r, d^r\}$ denote the Bockstein spectral sequence for weighted homology. $E_n^r$ is isomorphic to the subgroup of $H_n(K,w;\mathbb{Z}/p^r\mathbb{Z})$ given by the image of $H_n(K,w;\mathbb{Z}/p^r\mathbb{Z})\xrightarrow{-\times p^{r-1}}H_n(K,w;\mathbb{Z}/p^r\mathbb{Z})$ and $d^r: E_n^r\to E_{n-1}^r$ can be identified with the connecting homomorphism, the $r$-th order Bockstein homomorphism.
\qed
\end{theorem}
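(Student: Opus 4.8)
The plan is to follow the classical computation of the pages of the Bockstein spectral sequence, \cite[Prop.~10.4]{mccleary2001user}, and to check that nothing special is lost in the weighted setting. The one weighted ingredient we need is already in hand: the weighted boundary maps $\partial_n$ commute with multiplication by $p^s$ and with reduction $\red_{p^s}$ for every $s\geq 1$. This is exactly what produced the short exact sequences of chain complexes, hence the exact couple, in Subsection~\ref{ssec:bocksteinhom}; by the same token Lemma~\ref{lemma:longexact} holds with $r$ replaced by any power $p^s$, yielding the universal-coefficient short exact sequences $0\to H_n(K,w)/p^sH_n(K,w)\to H_n(K,w;\mathbb{Z}/p^s\mathbb{Z})\to {}_{p^s}H_{n-1}(K,w)\to 0$, as well as all the coefficient-comparison ladders used below. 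From this point on the argument is pure homological algebra of exact couples, so no finiteness hypothesis is needed.

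First I would record the derived-couple description of the pages. Writing $(D^1,E^1,i,j,k)$ for the exact couple of Subsection~\ref{ssec:bocksteinhom}, with $D^1=H_*(K,w)$, $E^1=H_*(K,w;\mathbb{Z}/p\mathbb{Z})$, $i=-\times p$, $j={\red_p}_*$ and $k=\partial$, the $r$-th page is the subquotient
\[
E^r\;\cong\;k^{-1}\!\bigl(i^{\,r-1}D^1\bigr)\,/\,j\bigl(\ker i^{\,r-1}\bigr)
\]
of $E^1$, with $d^r$ induced by $j\circ i^{-(r-1)}\circ k$ and $D^r\cong i^{\,r-1}D^1=p^{\,r-1}H_*(K,w)$. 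Thus $E^r_n$ is cut out inside $H_n(K,w;\mathbb{Z}/p\mathbb{Z})$ by the requirement that $\partial(e)$ be divisible by $p^{r-1}$, modulo the image under ${\red_p}_*$ of the $p^{r-1}$-torsion of $H_n(K,w)$; and the formula $d^r=\red_p\circ\bigl(\tfrac1{p^{\,r-1}}\partial\bigr)$ is, up to this identification, the $r$-th order Bockstein operator of Subsection~\ref{ssec:bocksteinhom}.

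Next I would compare with $\mathbb{Z}/p^r\mathbb{Z}$-coefficients. For each $r$, the short exact sequence $0\to\mathbb{Z}\xrightarrow{\times p^r}\mathbb{Z}\xrightarrow{q_r}\mathbb{Z}/p^r\mathbb{Z}\to 0$ gives a long exact sequence as in Lemma~\ref{lemma:longexact}, and the maps of coefficient sequences with vertical arrows $(\times p^{\,r-1},\ \mathrm{id},\ \red_p)$ and $(\mathrm{id},\ \times p^{\,r-1},\ \mathrm{incl.})$ produce commutative ladders relating it to the $\mathbb{Z}/p\mathbb{Z}$-sequence that defines the exact couple. Chasing the universal-coefficient short exact sequences through these ladders identifies the subquotient $E^r_n$ above with $\Ima\bigl(H_n(K,w;\mathbb{Z}/p^r\mathbb{Z})\xrightarrow{-\times p^{\,r-1}}H_n(K,w;\mathbb{Z}/p^r\mathbb{Z})\bigr)$: by the universal-coefficient description both groups are assembled from the free part of $H_n(K,w)$ together with the order-$p$ subgroups of those cyclic summands of $p$-power order in $H_n(K,w)$ and $H_{n-1}(K,w)$ whose order is at least $p^r$, and the comparison maps match these pieces up. I would carry this out either by the five lemma applied to the ladder of long exact sequences, or, in the finite-type case, by a direct check on each cyclic summand via the structure theorem; the base case $r=1$ is immediate (here $-\times p^0=\mathrm{id}$, the image is all of $E^1_n$, and $d^1=\beta$), and the general case follows by induction, passing to the derived couple at each stage. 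The same ladder chase then shows that under this identification the map $j\circ i^{-(r-1)}\circ k$ corresponds to the connecting homomorphism of $0\to\mathbb{Z}/p^r\mathbb{Z}\to\mathbb{Z}/p^{2r}\mathbb{Z}\to\mathbb{Z}/p^r\mathbb{Z}\to 0$, i.e.\ to the $r$-th order Bockstein operator.

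I expect the main obstacle to be purely the bookkeeping: keeping careful track of the nested subquotients $k^{-1}(i^{\,r-1}D^1)/j(\ker i^{\,r-1})$ of $E^1$ and verifying that the comparison maps built from the coefficient ladders are mutually compatible for all $r$ at once, so that the isomorphisms $E^r_n\cong\Ima(-\times p^{\,r-1})$ really do intertwine $d^r$ with the $r$-th order Bockstein. No new topological or algebraic phenomenon is introduced by the weights --- every map in the picture commutes with the weighted boundary $\partial_n$ --- so the argument of \cite[Prop.~10.4]{mccleary2001user} applies essentially verbatim once the weighted exact couple of Subsection~\ref{ssec:bocksteinhom} is in place.
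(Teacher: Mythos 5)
Your proposal is correct and takes essentially the same route as the paper, which proves this theorem simply by invoking the argument of \cite[Prop.~10.4]{mccleary2001user} after observing that the weighted boundary maps commute with the coefficient maps (the point of Subsection~\ref{ssec:bocksteinhom}). Your write-up merely fills in the details of that classical exact-couple computation, with the appropriate finite-type caveat where the structure theorem is used.
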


The $r$-th order Bockstein $\beta_r: H_n(K,w;\mathbb{Z}/p^r\mathbb{Z})\to H_{n-1}(K,w;\mathbb{Z}/p^r\mathbb{Z})$ maps $[c\otimes 1]$ to $[\frac{1}{p^r}\partial c\otimes 1]$. We also state a useful observation (Proposition \ref{prop:summand}), which is a generalization of a result in \cite[p.~481]{may2011more} to weighted homology. We note that since $\Z/p\Z$ is a field, $d^rE^r_{n+1}$ is a vector space isomorphic to a direct sum of $\Z/p\Z$ summands. In addition, we note that for a finite-type WSC $(K,w)$, $H_n(K,w)$ is a finitely generated abelian group which is canonically isomorphic to a direct sum of summands consisting of primary cyclic groups and infinite cyclic groups.

\begin{prop}
\label{prop:summand}
Let $(K,w)$ be a WSC of finite-type. There is a one-to-one correspondence between each summand $\mathbb{Z}/p\mathbb{Z}$ in the vector space $d^rE^r_{n+1}$, and each summand $\mathbb{Z}/p^r\mathbb{Z}$ in $H_{n}(K,w)$. In particular, there is a summand $\mathbb{Z}/p^r\mathbb{Z}$ in $H_*(K,w)$ if and only if the differential $d^r$ is nonzero.
\end{prop}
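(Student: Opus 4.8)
The plan is to reduce the statement to a local computation on two kinds of elementary chain complexes and then add up the contributions. First I would use that, since $(K,w)$ is of finite type, $C_*(K,w)$ is a bounded-below complex of finitely generated free abelian groups; over the PID $\mathbb{Z}$, iterated Smith normal form shows that such a complex is chain homotopy equivalent to a direct sum of \emph{elementary} complexes, namely one copy of $(\cdots\to 0\to\mathbb{Z}\to 0\to\cdots)$ concentrated in degree $n$ for each $\mathbb{Z}$-summand of $H_n(K,w)$, and one copy of $(\cdots\to 0\to\mathbb{Z}\xrightarrow{m}\mathbb{Z}\to 0\to\cdots)$ with the two copies of $\mathbb{Z}$ in degrees $n+1$ and $n$ for each $\mathbb{Z}/m\mathbb{Z}$-summand of $H_n(K,w)$. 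Since homology with finite cyclic coefficients, the Bockstein exact couple of Theorem~\ref{thm:einfinity}, and hence the whole spectral sequence $\{E^r,d^r\}$ together with the subspaces $d^rE^r_{n+1}$, are all additive over direct sums of complexes and invariant under chain homotopy equivalence, it suffices to compute the contribution of each elementary summand. Writing $m=p^am'$ with $p\nmid m'$ and splitting $\mathbb{Z}/m\mathbb{Z}\cong\mathbb{Z}/p^a\mathbb{Z}\oplus\mathbb{Z}/m'\mathbb{Z}$, the $\mathbb{Z}/m'\mathbb{Z}$-part has $E^1=0$ and contributes nothing, so only free summands and $p$-power torsion summands matter.

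Second I would dispose of the two relevant elementary pieces. For a free summand (a $\mathbb{Z}$ in degree $n$), one has $H_*(-;\mathbb{Z}/p^r\mathbb{Z})=\mathbb{Z}/p^r\mathbb{Z}$ concentrated in degree $n$, so it contributes a single $\mathbb{Z}/p\mathbb{Z}$ to each $E^r_n$ on which all differentials in and out vanish for degree reasons; such a class survives to $E^\infty$ (compatibly with Theorem~\ref{thm:einfinity}) and contributes nothing to any $d^rE^r$. For the piece $D_\bullet$ with $D_{n+1}=\mathbb{Z}\langle c\rangle$, $D_n=\mathbb{Z}\langle g\rangle$ and $\partial c=p^ag$, a direct computation (or the universal coefficient theorem) gives $H_{n+1}(D;\mathbb{Z}/p^r\mathbb{Z})\cong H_n(D;\mathbb{Z}/p^r\mathbb{Z})\cong\mathbb{Z}/p^{\min(a,r)}\mathbb{Z}$ and all other homology zero; hence by Theorem~\ref{thm:identifiedimage}, $E^r_{n+1}\cong E^r_n\cong\mathbb{Z}/p\mathbb{Z}$ for $1\le r\le a$ and $E^r=0$ once $r>a$. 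The crux is to evaluate $d^r=\beta_r$ on this piece. Using that $E^r_{n+1}$ is represented by $p^{r-1}c\otimes 1$ (an integral lift being $p^{r-1}c$) and the formula $\beta_r[c'\otimes 1]=[\frac{1}{p^r}\partial c'\otimes 1]$, one finds $\beta_r[p^{r-1}c\otimes 1]=[p^{a-1}g\otimes 1]\in H_n(D;\mathbb{Z}/p^r\mathbb{Z})$. When $r<a$ this is $p^{a-1}g\equiv 0\pmod{p^r}$, so $d^r=0$ on this piece; when $r=a$ it is $p^{a-1}g$, which is exactly a generator of $E^a_n=p^{a-1}(\mathbb{Z}/p^a\mathbb{Z})\cong\mathbb{Z}/p\mathbb{Z}$, so $d^a\colon E^a_{n+1}\to E^a_n$ is an isomorphism of $\mathbb{Z}/p\mathbb{Z}$'s (forcing $E^{a+1}=0$, consistent with the previous sentence). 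Thus this elementary piece contributes exactly one summand $\mathbb{Z}/p\mathbb{Z}$ to $d^aE^a_{n+1}$ and nothing to $d^rE^r_{n+1}$ for $r\ne a$.

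Third I would assemble the count. Summing the contributions over all elementary summands of $C_*(K,w)$, the number of copies of $\mathbb{Z}/p^r\mathbb{Z}$ in $H_n(K,w)$ equals the number of elementary pieces of type $\mathbb{Z}\xrightarrow{p^r}\mathbb{Z}$ placed in degrees $n+1,n$, which by the previous paragraph equals $\dim_{\mathbb{Z}/p\mathbb{Z}}\big(d^rE^r_{n+1}\big)$. Matching up these summands gives the asserted one-to-one correspondence. The ``in particular'' clause is then immediate: $d^r\neq 0$ as a map of the $E^r$-page iff $d^rE^r_{n+1}\neq 0$ for some $n$ iff $H_n(K,w)$ has a $\mathbb{Z}/p^r\mathbb{Z}$ summand for some $n$, i.e.\ iff $H_*(K,w)$ contains a summand $\mathbb{Z}/p^r\mathbb{Z}$.

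The main obstacle is the explicit bookkeeping for the elementary torsion complex $D_\bullet$: one must apply the identification in Theorem~\ref{thm:identifiedimage} of $E^r$ with the image of multiplication by $p^{r-1}$ carefully, keeping track of which integral lifts to use, so that the vanishing of $d^r$ for $r<a$ and the fact that $d^a$ is an isomorphism both fall out cleanly; everything else --- the decomposition into elementary complexes, the additivity and homotopy-invariance of the spectral sequence, and the free case --- is routine. If one prefers to avoid the splitting argument, the same conclusion can be reached directly from the long exact sequences~(\ref{eqn:bocksteinlong2}) and an analysis of the mod-$p$ reductions, essentially the argument of \cite[p.~481]{may2011more}, but the elementary-complex reduction keeps the combinatorics most transparent.
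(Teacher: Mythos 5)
Your proof is correct, but it takes a genuinely different route from the paper's. The paper argues directly on the chain level: it picks, for each summand $\mathbb{Z}/p\mathbb{Z}$ of $d^rE^r_{n+1}$, a representative $[c_i\otimes 1]$ with $\partial_{n+1}(c_i)=p^rv_i$, asserts the $v_i$ are linearly independent over $\mathbb{Z}$, and concludes that each class $v_i+\Ima\partial_{n+1}$ generates a $\mathbb{Z}/p^r\mathbb{Z}$ summand of $H_n(K,w)$ (and conversely), using the explicit formula $d^r[c\otimes 1]=[\tfrac{1}{p^r}\partial c\otimes 1]$ throughout. You instead invoke the classical decomposition of a bounded-below complex of finitely generated free abelian groups into elementary complexes via iterated Smith normal form, observe that the Bockstein exact couple is additive and homotopy-invariant, and then verify the statement on the two elementary pieces. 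Your approach buys rigor: the paper's proof is terse about why each $v_i+\Ima\partial_{n+1}$ has order exactly $p^r$ and why it splits off as a direct summand, whereas in your reduction these facts are built into the structure theorem, and the identification of $E^r$ with the image of $\times p^{r-1}$ (Theorem~\ref{thm:identifiedimage}) is applied to a two-term complex where everything is transparent. The cost is the extra machinery (elementary-complex decomposition, additivity and homotopy invariance of the spectral sequence), plus one small point you should state more carefully: Smith normal form produces elementary pieces $\mathbb{Z}\xrightarrow{m}\mathbb{Z}$ with $m$ an invariant factor, not necessarily a prime power, so the computation in your second paragraph should be run with $\partial c=p^am'g$, $p\nmid m'$; since $m'$ is a unit mod $p^r$ this changes nothing in the conclusion, but the complex itself does not literally split as a direct sum according to $\mathbb{Z}/m\mathbb{Z}\cong\mathbb{Z}/p^a\mathbb{Z}\oplus\mathbb{Z}/m'\mathbb{Z}$. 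Both arguments establish the stated correspondence; yours generalizes verbatim to the PID setting of Proposition~\ref{thm:gencorr}.
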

\begin{proof}
Each $i$th summand $\Z/p\Z$ in $d^rE_{n+1}^r$ corresponds to exactly one element $[c_i\otimes 1]\in H_{n+1}(K,w; \Z/p^r\Z)$ such that $\partial_{n+1}(c_i)=p^rv_i$ for some $v_i\in\ker\partial_n$. Furthermore the $v_i$ are linearly independent over $\mathbb{Z}$, in particular $v_i\neq v_j$ if $i\neq j$. Hence each $v_i+\Ima\partial_{n+1}$ generates a summand $\Z/p^r\Z$ in $H_n(K,w)$.

Conversely, each $i$th summand $\Z/p^r\Z$ in $H_n(K,w)$ is generated by $v_i'+\Ima\partial_{n+1}$, where $p^rv_i'=\partial_{n+1}(c_i')$ for some $c_i'\in C_{n+1}(K,w)$. Each $v_i'$ is distinct and the $v_i'$ are linearly independent over $\Z$. Hence the $c_i'$ are also distinct and linearly independent over $\Z$. Then for each $[c_i'\otimes 1]\in H_{n+1}(K,w;\Z/p^r\Z)$, we have $d^r[c_i'\otimes 1]=[\frac{1}{p^r}\partial_{n+1}c_i'\otimes 1]\neq 0$ which generates one summand $\Z/p\Z$ in $d^r E_{n+1}^r$.

In particular, there is a summand $\Z/p^r\Z$ in $H_*(K,w)$ iff there is a summand $\Z/p\Z$ in $d^r E_{n+1}^r$ iff $d^r$ is nonzero.
\end{proof}
\subsection{Applications}
For a finite (or finite-type) WSC $(K,w)$, a complete knowledge of the Bockstein spectral sequences for all primes $p$ allows us to recover completely the integral weighted homology $H_*(K,w)$. From Theorem \ref{thm:einfinity}, the $E^\infty$ term tells us the torsion-free part of $H_*(K,w)$. Moreover, by Proposition \ref{prop:summand}, the rank of the differential $d^r$ tells us the number of summands of $\mathbb{Z}/p^r\mathbb{Z}$ in the integral weighted homology. 

Hence, in the event that the Bockstein spectral sequence is known or has already been computed, we can skip the calculation of the integral weighted homology, and instead derive it from the Bockstein spectral sequence. We illustrate the above idea with an example.

\begin{eg}
Consider the WSC $(K,w)$ shown in Figure \ref{fig:bocksteineg}.
\begin{figure}[htbp]
\begin{center}
\begin{tikzpicture}[scale=0.8]
\draw (4.22,2.56) node[anchor=north west] {$v_0$};
\draw (6.62,4.72) node[anchor=north west] {$v_2$};
\draw (9.0,2.56) node[anchor=north west] {$v_1$};
\draw (5.,2.)-- (7.,4.);
\draw (9.,2.)-- (7.,4.);
\draw (5.,2.)-- (9.,2.);
\begin{scriptsize}
\draw [fill=black] (5.,2.) circle (2.5pt);
\draw [fill=black] (7.,4.) circle (2.5pt);
\draw [fill=black] (9.,2.) circle (2.5pt);
\end{scriptsize}
\end{tikzpicture}
\caption{WSC $(K,w)$ with the following weights: $w(v_0)=w(v_1)=w(v_2)=1$, $w([v_0,v_1])=w([v_1,v_2])=w([v_0,v_2])=4$.}
\label{fig:bocksteineg}
\end{center}
\end{figure}
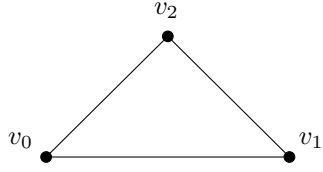

We first compute the Bockstein spectral sequence for $p=2$. We get the following results, where the notation $\mathbb{Z}/p$ is short for $\mathbb{Z}/p\mathbb{Z}$.

\begin{align*}
E_n^1&=\begin{cases}H_0(K,w;\mathbb{Z}/2)\cong\mathbb{Z}/2\oplus\mathbb{Z}/2\oplus\mathbb{Z}/2,&\text{for}\ n=0\\
H_1(K,w;\mathbb{Z}/2)\cong\Z/2,&\text{for}\ n=1.
\end{cases}\\
d^1&=0\\
E_n^2&\cong\begin{cases}\Z/2\oplus\Z/2\oplus\Z/2,&\text{for}\ n=0\\
\Z/2\oplus\Z/2\oplus\Z/2,&\text{for}\ n=1.
\end{cases}\\
d^2E_1^2&\cong\Z/2\oplus\Z/2\\
E_n^3&\cong\begin{cases}\Z/2,&\text{for}\ n=0\\
\Z/2,&\text{for}\ n=1.
\end{cases}\\
d^r&=0\quad\text{for}\ r\geq 3\\
E_n^\infty&\cong\begin{cases}\Z/2,&\text{for}\ n=0\\
\Z/2,&\text{for}\ n=1.
\end{cases}
\end{align*}

The interpretation of the above results is as follows. Firstly, since $E_0^\infty=E_1^\infty=\mathbb{Z}/2\mathbb{Z}$, we conclude using Theorem \ref{thm:einfinity} that 
\begin{equation*}
H_0(K,w)/\text{torsion}=H_1(K,w)/\text{torsion}=\mathbb{Z}.
\end{equation*}
From the differentials, we conclude that $H_0(K,w)$ has exactly 2 summands of $\mathbb{Z}/4\mathbb{Z}$, and $H_*(K,w)$ has no summands of the form $\mathbb{Z}/2^r\mathbb{Z}$ for $r\neq 2$.

For $p\neq 2$, we can compute that $d^r=0$ for all $r$. This is due to the weighted boundary map $\partial_1$ which produces output that are not divisible by $p$ for $p\neq 2$. For instance, $\partial_1([v_0,v_1])=4v_1-4v_0$. Hence, elements in $E_n^r$ are cycles and thus $d^r(E_n^r)=0$. We conclude that $H_*(K,w)$ has no summands of the form $\mathbb{Z}/p^r\mathbb{Z}$ for $p\neq 2$.

Combining the above information, we recover the integral weighted homology:
\begin{equation*}
H_n(K,w)=\begin{cases}
\Z\oplus\Z/4\oplus\Z/4, &\text{for}\ n=0\\
\Z, &\text{for}\ n=1.
\end{cases}
\end{equation*}
\end{eg}
\subsection{Application to Weighted Persistent Homology}
Let $(\mathcal{K},w)=\{(K^i,w)\}_{i\geq 0}$ be a weighted filtered complex. Let $H_k^i(\mathcal{K},w)$ (or $H_k^i$ for short) denote the $k$-th weighted homology group of the $i$-th complex $K^i$. It is known that the homomorphism 
\begin{equation*}
\begin{split}
\eta_k^{i,q}: H_k^i&\to H_k^{i+q}\\
\alpha+B_k^i&\mapsto\alpha+B_k^{i+q}
\end{split}
\end{equation*}
is well-defined, and $\Ima\eta_k^{i,q}\cong H_k^{i,q}$ (cf.\ \cite[p.~2680]{ren2018weighted}).
\begin{defn}
Let $H_k^i(\mathcal{K},w;\mathbb{Z}/p\mathbb{Z})$ denote the $k$-th weighted homology group of the $i$-th complex $K^i$, with coefficients in $\Z/p\Z$.
We define the map
\begin{equation*}
\begin{split}
\theta_k^{i,q}: H_k^i(\mathcal{K},w;\mathbb{Z}/p\mathbb{Z})&\to H_k^{i+q}(\mathcal{K},w;\mathbb{Z}/p\mathbb{Z})\\
c\otimes 1+\Ima(\partial_{k+1}^i\otimes 1)&\mapsto c\otimes 1+\Ima(\partial_{k+1}^{i+q}\otimes 1)
\end{split}
\end{equation*}
where $\partial_k^i\otimes 1$ is the map
\begin{equation*}
\partial_k^i\otimes 1: C_k(K^i,w)\otimes\mathbb{Z}/p\mathbb{Z}\to C_{k-1}(K^i,w)\otimes\mathbb{Z}/p\mathbb{Z}.
\end{equation*}
\end{defn}

The map $\theta_k^{i,q}$ is well-defined, since if $c_1\otimes 1+\Ima(\partial_{k+1}^i\otimes 1)=c_2\otimes 1+\Ima(\partial_{k+1}^{i}\otimes 1)$, then $(c_1-c_2)\otimes 1\in\Ima(\partial_{k+1}^i\otimes 1)\subseteq\Ima(\partial_{k+1}^{i+q}\otimes 1)$. Similarly, we also have
\begin{equation*}
\Ima\theta_k^{i,q}\cong H_k^{i,q}(\mathcal{K},w;\Z/p\Z).
\end{equation*}
\begin{prop}
\label{prop:etatheta}
The following is a commutative diagram with exact rows for all $i,q\geq 0$:
\begin{equation*}
\begin{tikzcd}[column sep=scriptsize]
\dots\arrow[r] &H_{k+1}^i(\mathcal{K},w;\Z/p)\arrow[r,"\partial"]\arrow[d,"\theta_{k+1}^{i,q}"] &H_k^i(\mathcal{K},w) \arrow[r,"-\times p"]\arrow[d,"\eta_k^{i,q}"] &H_k^i(\mathcal{K},w) \arrow[r,"{\red_p}_*"]\arrow[d,"\eta_k^{i,q}"] &H_k^i(\mathcal{K},w;\Z/p) \arrow[r]\arrow[d,"\theta_{k}^{i,q}"]  &\dots\\
\dots\arrow[r] &H_{k+1}^{i+q}(\mathcal{K},w;\Z/p)\arrow[r,"\partial"] &H_k^{i+q}(\mathcal{K},w) \arrow[r,"-\times p"] &H_k^{i+q}(\mathcal{K},w) \arrow[r,"{\red_p}_*"] &H_k^{i+q}(\mathcal{K},w;\Z/p) \arrow[r] &\dots
\end{tikzcd}
\end{equation*}
\end{prop}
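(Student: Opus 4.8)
The plan is to reduce the statement to a functoriality property of the Zig-zag Lemma (Lemma~\ref{zigzag}) applied levelwise. Recall that for each fixed filtration index $i$, the long exact sequence in the top (resp.\ bottom) row of the diagram is precisely the long exact sequence of Lemma~\ref{lemma:longexact} applied to the WSC $(K^i,w)$ (resp.\ $(K^{i+q},w)$), arising from the short exact sequence of chain complexes \eqref{eq:shortexactchain} for the modulus $r=p$. So both rows are already known to be exact; the only thing that remains is to check that the vertical maps $\eta_k^{i,q}$ and $\theta_k^{i,q}$ make all the squares commute.

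First I would observe that the inclusion of simplicial complexes $K^i\hookrightarrow K^{i+q}$ is a simplicial map, hence by Theorem~\ref{fdcommute} it induces a chain map $\iota_\sharp\colon C_*(K^i,w)\to C_*(K^{i+q},w)$ commuting with the weighted boundary maps. This chain map is compatible with multiplication by $p$ and with $\red_p$ (the latter because $\iota_\sharp\otimes 1$ is just the induced map on $C_*(-,w)\otimes\Z/p\Z$), so we get a commutative ladder of short exact sequences of chain complexes
\begin{equation*}
\begin{tikzcd}[column sep=small]
0\arrow[r] &C_*(K^i,w)\arrow[r,"-\times p"]\arrow[d,"\iota_\sharp"] &C_*(K^i,w)\arrow[r,"\red_p"]\arrow[d,"\iota_\sharp"] &C_*(K^i,w)\otimes\Z/p\Z\arrow[r]\arrow[d,"\iota_\sharp\otimes 1"] &0\\
0\arrow[r] &C_*(K^{i+q},w)\arrow[r,"-\times p"] &C_*(K^{i+q},w)\arrow[r,"\red_p"] &C_*(K^{i+q},w)\otimes\Z/p\Z\arrow[r] &0.
\end{tikzcd}
\end{equation*}
The naturality of the long exact homology sequence with respect to morphisms of short exact sequences of chain complexes (the functorial form of the Zig-zag Lemma, e.g.\ the standard statement in \cite[p.~137]{Munkres1984}) then yields a commutative diagram whose vertical maps are $(\iota_\sharp)_*$ on $H_k(K^i,w)\to H_k(K^{i+q},w)$ and $(\iota_\sharp\otimes 1)_*$ on $H_k(K^i,w;\Z/p\Z)\to H_k(K^{i+q},w;\Z/p\Z)$, including commutation with the connecting homomorphism $\partial$.

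The final step is to identify these induced maps with $\eta_k^{i,q}$ and $\theta_k^{i,q}$. On a cycle $\alpha\in Z_k^i$, the map $(\iota_\sharp)_*$ sends $\alpha+B_k^i$ to $\iota_\sharp(\alpha)+B_k^{i+q}=\alpha+B_k^{i+q}$, since $\iota_\sharp$ is the identity on chains (it merely reinterprets a chain of $K^i$ as a chain of $K^{i+q}$); this is exactly $\eta_k^{i,q}$. The same reasoning with $\iota_\sharp\otimes 1$ identifies $(\iota_\sharp\otimes 1)_*$ with $\theta_k^{i,q}$. I do not expect any serious obstacle here: the content is entirely the naturality of the connecting homomorphism, and the one point requiring a little care is checking that $\iota_\sharp\otimes 1$ really does induce $\theta_k^{i,q}$ at the level of $H_k(-;\Z/p\Z)$, i.e.\ that passing to the quotient by $\Ima(\partial_{k+1}^i\otimes 1)$ is compatible with the inclusion $\Ima(\partial_{k+1}^i\otimes 1)\subseteq\Ima(\partial_{k+1}^{i+q}\otimes 1)$ — but this is precisely the well-definedness of $\theta_k^{i,q}$ already verified above. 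Alternatively, if one prefers not to invoke naturality as a black box, one can simply trace an element around each of the four types of squares using the explicit description of $\partial$ in Remark~\ref{remark:connecth}, which is a short diagram chase.
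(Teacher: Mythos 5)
Your proposal is correct, but it reaches the conclusion by a different route than the paper. The paper simply asserts exactness of the rows via Lemma~\ref{lemma:longexact} and then verifies the commutativity of each square directly, by evaluating both composites on an explicit class (e.g.\ computing $\partial\theta_{k+1}^{i,q}$ and $\eta_k^{i,q}\partial$ on $c\otimes 1+\Ima(\partial_{k+2}^i\otimes 1)$ and seeing both give $\frac{1}{p}\partial_{k+1}^{i+q}c+\Ima(\partial_{k+1}^{i+q})$, and similarly for the $-\times p$ and ${\red_p}_*$ squares). You instead package the whole diagram as the naturality of the long exact homology sequence applied to the morphism of short exact sequences of chain complexes induced by the inclusion $\iota\colon K^i\hookrightarrow K^{i+q}$, using Theorem~\ref{fdcommute} to see that $\iota_\sharp$ is a chain map for the weighted boundaries, and then identify $(\iota_\sharp)_*$ and $(\iota_\sharp\otimes 1)_*$ with $\eta_k^{i,q}$ and $\theta_k^{i,q}$. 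This is a legitimate and arguably cleaner argument: it handles the only genuinely delicate square (the one involving the connecting homomorphism $\partial$) by citing a standard naturality statement rather than unwinding Remark~\ref{remark:connecth} by hand, and it makes transparent why the vertical maps are well defined. What it costs is the reliance on the functorial form of the Zig-zag Lemma, which the paper only states in its non-functorial form; your closing remark that one could alternatively trace elements through each square is exactly what the paper does, so the two proofs meet in the middle. The one point worth making explicit in your write-up is that the weighted boundary maps of $K^i$ and $K^{i+q}$ agree on simplices of $K^i$ because the filtration uses a single weight function $w$; this is what makes $\iota_\sharp$ literally the identity on chains and hence makes the identification with $\eta_k^{i,q}$ immediate.
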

\begin{proof}
The exactness of the rows is due to Lemma \ref{lemma:longexact}, obtained in the process of constructing the Bockstein spectral sequence.

We check that each square commutes.
\begin{align*}
\partial\theta_{k+1}^{i,q}(c\otimes 1+\Ima(\partial_{k+2}^i\otimes 1))&=\eta_k^{i,q}\partial(c\otimes 1+\Ima(\partial_{k+2}^i\otimes 1))\\
&=\frac{1}{p}\partial_{n+1}^{i+q}c+\Ima(\partial_{k+1}^{i+q})\\
(-\times p)\eta_k^{i,q}(\alpha+\Ima(\partial_{k+1}^i))&=\eta_k^{i,q}(-\times p)(\alpha+\Ima(\partial_{k+1}^i))\\
&=p\alpha+\Ima(\partial_{k+1}^{i+q})\\
{\red_p}_*\eta_k^{i,q}(\alpha+\Ima(\partial_{k+1}^i))&=\theta_k^{i,q}{\red_p}_*(\alpha+\Ima(\partial_{k+1}^i))\\
&=\alpha\otimes 1+\Ima(\partial_{k+1}^{i+q}\otimes 1).
\end{align*}
\end{proof}
\begin{prop}
\label{prop:commdiagram2}
The following is a commutative diagram with exact rows for all $i,q\geq 0$:
\begin{equation*}
\begin{tikzcd}[column sep=tiny]
\dots\arrow[r] &H_{k}^i(\mathcal{K},w;\Z/p)\arrow[r,"-\times p"]\arrow[d,"\theta_{k}^{i,q}"] &H_k^i(\mathcal{K},w;\Z/p^2) \arrow[r]\arrow[d,"\epsilon_k^{i,q}"] &H_k^i(\mathcal{K},w;\Z/p) \arrow[r,"\beta"]\arrow[d,"\theta_k^{i,q}"] &H_{k-1}^i(\mathcal{K},w;\Z/p) \arrow[r]\arrow[d,"\theta_{k-1}^{i,q}"]  &\dots\\
\dots\arrow[r] &H_{k}^{i+q}(\mathcal{K},w;\Z/p)\arrow[r,"-\times p"] &H_k^{i+q}(\mathcal{K},w;\Z/p^2) \arrow[r] &H_k^{i+q}(\mathcal{K},w;\Z/p) \arrow[r,"\beta"] &H_{k-1}^{i+q}(\mathcal{K},w;\Z/p) \arrow[r] &\dots
\end{tikzcd}
\end{equation*}
where $\epsilon_k^{i,q}$ is defined similarly to $\theta_k^{i,q}$. That is, 
\begin{equation*}
\begin{split}
\epsilon_k^{i,q}: H_k^i(\mathcal{K},w;\Z/p^2)&\to H_k^{i+q}(\mathcal{K},w;\Z/p^2)\\
c\otimes 1+\Ima(\partial_{k+1}^i\otimes 1)&\mapsto c\otimes 1+\Ima(\partial_{k+1}^{i+q}\otimes 1).
\end{split}
\end{equation*}
\begin{proof}
The exactness of the rows is due to the long exact sequence (\ref{eqn:bocksteinlong2}). The commutativity of each square can be verified similarly to Proposition \ref{prop:etatheta}.
\end{proof}
\end{prop}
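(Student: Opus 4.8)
The plan is to read off the two rows from the long exact sequence (\ref{eqn:bocksteinlong2}) applied separately to the complexes $K^i$ and $K^{i+q}$, and then to verify that the three types of squares commute by a direct computation with representative chains, exactly as was done in Proposition \ref{prop:etatheta}. First I would recall that for each fixed $i$ the chain complex $C_*(K^i,w)$ consists of free abelian groups, so tensoring the short exact sequence of coefficient groups $0\to\mathbb{Z}/p\mathbb{Z}\to\mathbb{Z}/p^2\mathbb{Z}\to\mathbb{Z}/p\mathbb{Z}\to 0$ with it preserves exactness; the resulting short exact sequence of chain complexes feeds into the Zig-zag Lemma (Lemma \ref{zigzag}) and yields the long exact sequence (\ref{eqn:bocksteinlong2}) with $\beta$ as connecting homomorphism. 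Carrying this out for $K^i$ gives the top row and for $K^{i+q}$ the bottom row, so the exactness of the rows is immediate.

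Next I would fix the chain-level picture underlying the vertical maps. Since $K^i\subseteq K^{i+q}$, the inclusion is a simplicial map, so by Theorem \ref{fdcommute} the induced inclusion $C_*(K^i,w)\hookrightarrow C_*(K^{i+q},w)$ is a chain map for the weighted boundary operators, and moreover $\partial_k^{i+q}$ restricted to $C_k(K^i,w)$ is exactly $\partial_k^i$. Hence $\Ima(\partial_{k+1}^i\otimes 1)\subseteq\Ima(\partial_{k+1}^{i+q}\otimes 1)$ over both $\mathbb{Z}/p\mathbb{Z}$ and $\mathbb{Z}/p^2\mathbb{Z}$, which shows $\epsilon_k^{i,q}$ is well-defined by the same reasoning already used for $\theta_k^{i,q}$. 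The guiding observation is that each of $\theta$ and $\epsilon$ sends a class to the class of the \emph{same} chain-with-coefficient, only enlarging the boundary subgroup modulo which one quotients.

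With this in hand, commutativity of the squares is routine. For the square built on $-\times p\colon H_k^i(\mathcal{K},w;\mathbb{Z}/p)\to H_k^i(\mathcal{K},w;\mathbb{Z}/p^2)$, both composites send $c\otimes 1+\Ima(\partial_{k+1}^i\otimes 1)$ to $pc\otimes 1+\Ima(\partial_{k+1}^{i+q}\otimes 1)$, because multiplication by $p$ on the coefficient commutes with the chain inclusion; the square built on the reduction $\mathbb{Z}/p^2\mathbb{Z}\to\mathbb{Z}/p\mathbb{Z}$ is handled in exactly the same way. For the $\beta$-square the one additional ingredient is that the integral lift used to compute the connecting homomorphism can be chosen compatibly: representing a class by $c\in C_k(K^i,w)$ with $\partial_k^i c\equiv 0\pmod p$, the same $c$ serves as a lift at level $i+q$, and since $\partial_k^{i+q}c=\partial_k^i c$ we obtain $\beta\bigl(\theta_k^{i,q}[c\otimes 1]\bigr)=[\tfrac{1}{p}\partial_k^i c\otimes 1]=\theta_{k-1}^{i,q}\bigl(\beta[c\otimes 1]\bigr)$ in $H_{k-1}^{i+q}(\mathcal{K},w;\mathbb{Z}/p)$.

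The only point requiring care, and hence the main obstacle, is the naturality of the connecting homomorphism $\beta$ in the complex index: one must be certain that the ``divide by $p$'' step is unchanged when passing from $K^i$ to $K^{i+q}$. This is settled precisely by the identity $\partial_k^{i+q}|_{C_k(K^i,w)}=\partial_k^i$ noted above, so no real difficulty arises; the remainder is bookkeeping of representatives and amounts to the naturality of the snake-lemma connecting map with respect to the morphism of coefficient short exact sequences induced by $C_*(K^i,w)\hookrightarrow C_*(K^{i+q},w)$.
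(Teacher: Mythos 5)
Your proposal is correct and follows essentially the same route as the paper: the rows come from the long exact sequence (\ref{eqn:bocksteinlong2}) applied to $K^i$ and $K^{i+q}$, and the squares are checked by the same chain-level representative computation that the paper delegates to Proposition \ref{prop:etatheta}. Your explicit treatment of the naturality of $\beta$ via the compatibility $\partial_k^{i+q}|_{C_k(K^i,w)}=\partial_k^i$ is exactly the point the paper leaves implicit, so nothing is missing.
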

\begin{remark}
Similarly, we have that $\Ima\epsilon_{k}^{i,q}\cong H_k^{i,q}(\mathcal{K},w;\Z/p^2\Z)$.
\end{remark}

A direct application of the Four Lemma \cite[p.~364]{maclane2012homology} to the commutative diagram in Proposition \ref{prop:commdiagram2} gives us the following lemma.
\begin{lemma}
\label{thm:fourlemmaresult}
\begin{enumerate}
\item If $\theta_k^{i,q}$ is surjective, and $\epsilon_k^{i,q}$ and $\theta_{k-1}^{i,q}$ are injective, then $\theta_k^{i,q}$ is injective.
\item If $\theta_{k-1}^{i,q}$ is injective, and $\theta_k^{i,q}$ is surjective, then $\epsilon_k^{i,q}$ is surjective.
\end{enumerate}
\end{lemma}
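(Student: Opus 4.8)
The plan is to invoke the Four Lemma (in its two standard forms) directly on the commutative diagram of Proposition \ref{prop:commdiagram2}, which already has exact rows. Recall that the Four Lemma has two halves: the ``monomorphism'' version says that given a commutative diagram with exact rows
\begin{equation*}
\begin{tikzcd}[column sep=small]
A\arrow[r]\arrow[d,"a"] &B\arrow[r]\arrow[d,"b"] &C\arrow[r]\arrow[d,"c"] &D\arrow[d,"d"]\\
A'\arrow[r] &B'\arrow[r] &C'\arrow[r] &D'
\end{tikzcd}
\end{equation*}
if $a$ is epic and $b,d$ are monic, then $c$ is monic; the ``epimorphism'' version says that if $d$ is monic and $a,c$ are epic, then $b$ is epic. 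These are exactly \cite[p.~364]{maclane2012homology}.

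First I would align the relevant four-term window of the diagram in Proposition \ref{prop:commdiagram2} with the abstract setup. For part (1), take the window
\begin{equation*}
\begin{tikzcd}[column sep=small]
H_k^i(\mathcal{K},w;\Z/p)\arrow[r,"-\times p"]\arrow[d,"\theta_k^{i,q}"] &H_k^i(\mathcal{K},w;\Z/p^2)\arrow[r]\arrow[d,"\epsilon_k^{i,q}"] &H_k^i(\mathcal{K},w;\Z/p)\arrow[r,"\beta"]\arrow[d,"\theta_k^{i,q}"] &H_{k-1}^i(\mathcal{K},w;\Z/p)\arrow[d,"\theta_{k-1}^{i,q}"]\\
H_k^{i+q}(\mathcal{K},w;\Z/p)\arrow[r,"-\times p"] &H_k^{i+q}(\mathcal{K},w;\Z/p^2)\arrow[r] &H_k^{i+q}(\mathcal{K},w;\Z/p)\arrow[r,"\beta"] &H_{k-1}^{i+q}(\mathcal{K},w;\Z/p)
\end{tikzcd}
\end{equation*}
so that $a=\theta_k^{i,q}$, $b=\epsilon_k^{i,q}$, $c=\theta_k^{i,q}$, $d=\theta_{k-1}^{i,q}$. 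The hypotheses of part (1) --- $\theta_k^{i,q}$ surjective, $\epsilon_k^{i,q}$ and $\theta_{k-1}^{i,q}$ injective --- are precisely ``$a$ epic, $b$ and $d$ monic,'' so the monomorphism Four Lemma yields that $c=\theta_k^{i,q}$ is monic, which is the conclusion. For part (2), I would shift the window one step to the left, using the four terms
\begin{equation*}
H_{k+1}^{\bullet}(\mathcal{K},w;\Z/p)\xrightarrow{\beta}H_k^{\bullet}(\mathcal{K},w;\Z/p)\xrightarrow{-\times p}H_k^{\bullet}(\mathcal{K},w;\Z/p^2)\to H_k^{\bullet}(\mathcal{K},w;\Z/p),
\end{equation*}
with vertical maps $\theta_{k+1}^{i,q}$, $\theta_k^{i,q}$, $\epsilon_k^{i,q}$, $\theta_k^{i,q}$, so that now $a=\theta_{k+1}^{i,q}$, $b=\theta_k^{i,q}$, $c=\epsilon_k^{i,q}$, $d=\theta_k^{i,q}$. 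Wait --- in this labeling the epimorphism Four Lemma needs ``$d$ monic, $a$ and $c$ epic'' to conclude ``$b$ epic''; but we want to conclude $\epsilon_k^{i,q}=c$ is epic. So instead I use the window as in part (1): $a=\theta_k^{i,q}$, $b=\epsilon_k^{i,q}$, $c=\theta_k^{i,q}$, $d=\theta_{k-1}^{i,q}$, and apply the epimorphism half, which requires $d=\theta_{k-1}^{i,q}$ monic and $a=\theta_k^{i,q}$, $c=\theta_k^{i,q}$ epic, to conclude $b=\epsilon_k^{i,q}$ epic. Since the hypotheses of part (2) are exactly ``$\theta_{k-1}^{i,q}$ injective and $\theta_k^{i,q}$ surjective,'' this gives the result. (Note the same vertical map $\theta_k^{i,q}$ appears in both the $a$ and $c$ positions, which is why only two hypotheses suffice.)

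There is essentially no obstacle here beyond bookkeeping: the only point that needs care is matching the correct four-term window and the correct orientation of the Four Lemma to each of the two statements, since the map $\theta_k^{i,q}$ recurs in two non-adjacent columns and one must be careful that a single hypothesis on it covers both occurrences. Once the windows are identified as above, both statements are immediate citations of \cite[p.~364]{maclane2012homology}. I would write the proof as: ``Apply the monomorphism form of the Four Lemma to the displayed four-term window for (1); apply the epimorphism form to the same window for (2); in both cases the hypotheses translate directly.''
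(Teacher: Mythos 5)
Your proof is correct and is exactly the argument the paper intends: the paper simply states that the lemma is ``a direct application of the Four Lemma to the commutative diagram in Proposition \ref{prop:commdiagram2},'' and your identification of the four-term window, with $\theta_k^{i,q}$ occupying both the first and third columns so that a single hypothesis on it serves as both the ``$a$'' and ``$c$'' conditions, is the correct way to carry that out. No gaps; you in fact supply more bookkeeping detail than the paper does.
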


In particular, the second statement of Lemma \ref{thm:fourlemmaresult} has applications to calculate the mod $p^2$ weighted persistent homology given some information about the mod $p$ persistent homology. We describe it in the following theorem.
\begin{theorem}
\label{cor:ptop2}
Let $k,i,q\geq 0$. If both statements
\begin{equation}
\label{eqn:1sthomstatement}
H_{k-1}^{i,q}(\mathcal{K},w;\Z/p)\cong H_{k-1}^i(\mathcal{K},w;\Z/p),
\end{equation}
\begin{equation}
\label{eqn:2ndhomstatement}
H_k^{i,q}(\mathcal{K},w;\Z/p)\cong H_{k}^{i+q}(\mathcal{K},w;\Z/p),
\end{equation}
are true, then
\begin{equation*}
H_k^{i,q}(\mathcal{K},w;\Z/p^2)\cong H_k^{i+q}(\mathcal{K},w;\Z/p^2).
\end{equation*}

(When $k=0$, by convention we let $H_{k-1}^{i,q}$ and $H_{k-1}^i$ to be 0.)
\end{theorem}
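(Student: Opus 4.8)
The plan is to deduce the statement from the second part of Lemma \ref{thm:fourlemmaresult} together with the identifications $\Ima\theta_k^{i,q}\cong H_k^{i,q}(\mathcal{K},w;\Z/p)$ and $\Ima\epsilon_k^{i,q}\cong H_k^{i,q}(\mathcal{K},w;\Z/p^2)$ recorded after Propositions \ref{prop:etatheta} and \ref{prop:commdiagram2}. The whole argument lives in the categories of $\Z/p$- and $\Z/p^2$-modules attached to a finite (or finite-type) WSC, so all homology groups in sight are finitely generated over the respective base rings; this finiteness is exactly what will allow us to upgrade an abstract isomorphism of homology groups into a statement that a specific comparison map is injective or surjective.

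First I would translate hypothesis (\ref{eqn:2ndhomstatement}) into surjectivity of $\theta_k^{i,q}$. Since $\Ima\theta_k^{i,q}\cong H_k^{i,q}(\mathcal{K},w;\Z/p)$ and, by (\ref{eqn:2ndhomstatement}), $H_k^{i,q}(\mathcal{K},w;\Z/p)\cong H_k^{i+q}(\mathcal{K},w;\Z/p)$, the subspace $\Ima\theta_k^{i,q}$ of $H_k^{i+q}(\mathcal{K},w;\Z/p)$ is abstractly isomorphic to the ambient space; over $\Z/p$ in finite dimension this forces $\Ima\theta_k^{i,q}=H_k^{i+q}(\mathcal{K},w;\Z/p)$, i.e.\ $\theta_k^{i,q}$ is onto. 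Dually, hypothesis (\ref{eqn:1sthomstatement}) gives injectivity of $\theta_{k-1}^{i,q}$: the first isomorphism theorem yields $H_{k-1}^i(\mathcal{K},w;\Z/p)/\ker\theta_{k-1}^{i,q}\cong\Ima\theta_{k-1}^{i,q}\cong H_{k-1}^{i,q}(\mathcal{K},w;\Z/p)\cong H_{k-1}^i(\mathcal{K},w;\Z/p)$, and finiteness forces $\ker\theta_{k-1}^{i,q}=0$. When $k=0$ there is nothing to check on this side, since by convention $H_{-1}^{i,q}$ and $H_{-1}^i$ are $0$ and $\theta_{-1}^{i,q}\colon 0\to 0$ is trivially injective.

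With $\theta_k^{i,q}$ surjective and $\theta_{k-1}^{i,q}$ injective, Lemma \ref{thm:fourlemmaresult}(2) — the Four Lemma applied to the ladder of Proposition \ref{prop:commdiagram2}, with $\theta_k^{i,q}$ in the first and third columns, $\epsilon_k^{i,q}$ in the second, and $\theta_{k-1}^{i,q}$ in the fourth — gives that $\epsilon_k^{i,q}$ is surjective. Finally, surjectivity of $\epsilon_k^{i,q}$ means $\Ima\epsilon_k^{i,q}=H_k^{i+q}(\mathcal{K},w;\Z/p^2)$, and combining this with $\Ima\epsilon_k^{i,q}\cong H_k^{i,q}(\mathcal{K},w;\Z/p^2)$ gives $H_k^{i,q}(\mathcal{K},w;\Z/p^2)\cong H_k^{i+q}(\mathcal{K},w;\Z/p^2)$, as required.

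I expect the only delicate point — the main obstacle — to be the passage from abstract isomorphism of homology groups to concrete surjectivity/injectivity of the comparison maps $\theta_k^{i,q}$ and $\theta_{k-1}^{i,q}$. This is clean over the fields $\Z/p$ once finite-dimensionality is available, but it is precisely where the finite-type hypothesis is genuinely needed, so it should be flagged explicitly in the write-up. Everything else is bookkeeping with the exact ladders already constructed in Propositions \ref{prop:etatheta} and \ref{prop:commdiagram2} and the image identifications accompanying them.
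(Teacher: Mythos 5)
Your proof is correct and follows essentially the same route as the paper's: the first isomorphism theorem together with finite-dimensionality over $\Z/p$ converts hypotheses (\ref{eqn:1sthomstatement}) and (\ref{eqn:2ndhomstatement}) into injectivity of $\theta_{k-1}^{i,q}$ and surjectivity of $\theta_k^{i,q}$, and Lemma \ref{thm:fourlemmaresult}(2) then yields surjectivity of $\epsilon_k^{i,q}$ and hence the claimed isomorphism. If anything, you are more explicit than the paper about the finite-type hypothesis needed to upgrade an abstract isomorphism of $\Z/p$-vector spaces to an equality of a subspace (resp.\ triviality of a kernel), which is a point worth flagging.
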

\begin{proof}
By the first isomorphism theorem,
\begin{equation*}
H_{k-1}^i(\mathcal{K},w;\Z/p)/\ker\theta_{k-1}^{i,q}\cong\Ima\theta_{k-1}^{i,q}\cong H_{k-1}^{i,q}(\mathcal{K},w;\Z/p).
\end{equation*}
Hence, if (\ref{eqn:1sthomstatement}) holds, then $\theta_{k-1}^{i,q}$ is injective. Similarly, if (\ref{eqn:2ndhomstatement}) holds, then $\theta_k^{i,q}$ is surjective. By the second statement of Lemma \ref{thm:fourlemmaresult}, $\epsilon_{k}^{i,q}$ is surjective, i.e.
\begin{equation*}
H_k^{i,q}(\mathcal{K},w;\Z/p^2)\cong H_k^{i,q}(\mathcal{K},w;\Z/p^2).
\end{equation*}
\end{proof}
\begin{remark}
By considering the long exact sequence of homology associated to the short exact sequence of coefficients $0\to\Z/p^r\to\Z/p^{2r}\to\Z/p^r\to 0$ we can generalize Theorem \ref{cor:ptop2} to higher powers of $p$. That is, the conclusion of Theorem \ref{cor:ptop2} still holds if we replace $\Z/p$ by $\Z/p^r$ and $\Z/p^2$ by $\Z/p^{2r}$.
\end{remark}

The condition $H_{k-1}^{i,q}(\mathcal{K},w;\Z/p)\cong H_{k-1}^i(\mathcal{K},w;\Z/p)$ (\ref{eqn:1sthomstatement}) is necessary for Theorem \ref{cor:ptop2}. Without it, Theorem \ref{cor:ptop2} may not be true, as the following counterexample shows.

\begin{eg}
Consider the filtration of WSCs as shown in Figure \ref{fig:countereg}.
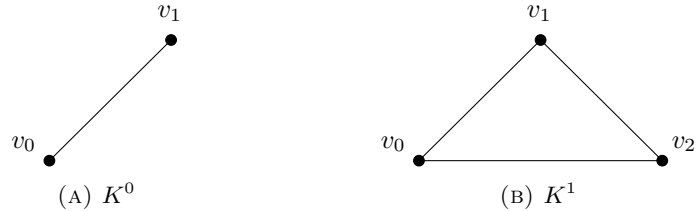
\begin{figure}[htbp]
\begin{subfigure}[]{.45\linewidth}
\centering
\begin{tikzpicture}[scale=0.8]
\draw (4.22,2.56) node[anchor=north west] {$v_0$};
\draw (6.62,4.72) node[anchor=north west] {$v_1$};
%\draw (9.0,2.56) node[anchor=north west] {$v_1$};
\draw (5.,2.)-- (7.,4.);
%\draw (9.,2.)-- (7.,4.);
%\draw (5.,2.)-- (9.,2.);
\begin{scriptsize}
\draw [fill=black] (5.,2.) circle (2.5pt);
\draw [fill=black] (7.,4.) circle (2.5pt);
%\draw [fill=black] (9.,2.) circle (2.5pt);
\end{scriptsize}
\end{tikzpicture}
\caption{$K^0$}
\end{subfigure}
\begin{subfigure}[]{.45\linewidth}
\centering
\begin{tikzpicture}[scale=0.8]
\draw (4.22,2.56) node[anchor=north west] {$v_0$};
\draw (6.62,4.72) node[anchor=north west] {$v_1$};
\draw (9.0,2.56) node[anchor=north west] {$v_2$};
\draw (5.,2.)-- (7.,4.);
\draw (9.,2.)-- (7.,4.);
\draw (5.,2.)-- (9.,2.);
\begin{scriptsize}
\draw [fill=black] (5.,2.) circle (2.5pt);
\draw [fill=black] (7.,4.) circle (2.5pt);
\draw [fill=black] (9.,2.) circle (2.5pt);
\end{scriptsize}
\end{tikzpicture}
\caption{$K^1$}
\end{subfigure}
\caption{The filtration of WSCs with the following weights: $w([v_0,v_1])=2$, $w(\sigma)=1$ for all other simplices $\sigma\neq [v_0,v_1]$.}
\label{fig:countereg}
\end{figure}

We have that
\begin{align*}
H_0^0(\mathcal{K},w;\Z/2)&\cong\Z/2\oplus\Z/2\\
H_0^{0,1}(\mathcal{K},w;\Z/2)&\cong\Z/2\\
H_1^1(\mathcal{K},w;\Z/2)&\cong\Z/2\\
H_1^{0,1}(\mathcal{K},w;\Z/2)&\cong\Z/2.
\end{align*}

That is, the first condition of Theorem \ref{cor:ptop2} is \emph{not} satisfied, but the second condition is satisfied. The conclusion of Theorem \ref{cor:ptop2} does not hold: 
\begin{align*}
H_1^1(\mathcal{K},w;\Z/4)&\cong\Z/4\\
H_1^{0,1}(\mathcal{K},w;\Z/4)&\cong 0.
\end{align*}
\end{eg}

The condition (\ref{eqn:2ndhomstatement}) in Theorem \ref{cor:ptop2} is also necessary. If (\ref{eqn:2ndhomstatement}) is not satisfied, we can construct a simple counterexample.
\begin{eg}
Consider $K^0=\{v_0\}$, $K^1=\{v_0,v_1\}$, with $w(v_0)=w(v_1)=1$. Then, we have:
\begin{align*}
H_0^{0,1}(\mathcal{K},w;\Z/2)\cong\Z/2\\
H_0^1(\mathcal{K},w;\Z/2)\cong\Z/2\oplus\Z/2\\
H_0^{0,1}(\mathcal{K},w;\Z/4)\cong\Z/4\\
H_0^1(\mathcal{K},w;\Z/4)\cong\Z/4\oplus\Z/4.
\end{align*}
\end{eg}
\section{Generalized Bockstein Spectral Sequence for Weighted Homology}
\label{sec:genbockstein}
In \cite[pp.~465--490]{lubkin1980cohomology}, a generalized Bockstein spectral sequence of the cochain complex $C^*$ with respect to a fixed element $t$ in the center of a ring $A$ was studied. In this section, we study and develop a generalized Bockstein spectral sequence in the context of weighted homology.

Let $R$ be an integral domain with 1. Let $(K,w)$ be a WSC with weight function $w: K\to R$. Let $a\in R\setminus\{0\}$ be a fixed element of $R$.

Consider the short exact sequence of coefficient rings
\begin{equation*}
0\to R\xrightarrow{\mu_a}R\xrightarrow{\rho_a}R\otimes R/aR\to 0
\end{equation*}
where $\mu_a(r)=ar$ and $\rho_a(r)=r\otimes 1$.

The chain complex $C_*(K,w)$ (over $R$) is a free $R$-module, and we obtain a short exact sequence of chain complexes
\begin{equation*}
0\to C_*(K,w)\xrightarrow{\mu_a} C_*(K,w)\xrightarrow{\rho_a}C_*(K,w)\otimes R/aR\to 0.
\end{equation*}

From that, we get a long exact sequence of homology groups, 
\begin{equation*}
\cdots\to H_n(K,w; R)\xrightarrow{{\mu_a}_*}H_n(K,w; R)\xrightarrow{{\rho_a}_*}H_n(K,w; R/aR)\xrightarrow{\partial}H_{n-1}(K,w; R)\to\cdots.
\end{equation*}

\begin{defn}
The \emph{generalized Bockstein homomorphism} is defined by
\begin{equation*}
\begin{split}
\beta: H_n(K,w; R/aR)&\to H_{n-1}(K,w; R/aR)\\
[c\otimes 1]&\mapsto [\frac{1}{a}\partial c\otimes 1].
\end{split}
\end{equation*}
\end{defn}
\begin{remark}
Note that if $R=\mathbb{Z}$ and $a\in\mathbb{Z}$, then $\beta$ is the usual Bockstein homomorphism \cite[p.~456]{mccleary2001user}.
\end{remark}
\subsection{The Generalized Bockstein Spectral Sequence for Weighted Homology}
Let $p$ be a prime element in the integral domain $R$. To set up the \emph{generalized Bockstein spectral sequence}, we view the long exact sequence as an exact couple:
\begin{equation*}
\begin{tikzcd}
H_*(K,w;R) \arrow[rr,"{\mu_p}_*"] & & H_*(K,w; R) \arrow[dl,"{\rho_p}_*"]\\
& H_*(K,w; R/pR) \arrow[ul,"\partial"] &
\end{tikzcd}
\end{equation*}

We define the $E^1$-term to be $E_n^1=H_n(K,w; R/pR)$, and the first differential to be $d^1={\rho_p}_*\circ\partial=\beta$, the generalized Bockstein homomorphism.
\begin{theorem}
\label{thm:genconv}
Let $R$ be a PID. Let $(K,w)$ be a finite (or finite-type) WSC, with weight function $w: K\to R$. Then there is a singly-graded spectral sequence $\{E_*^r, d^r\}$, with $E_n^1=H_n(K,w; R/pR)$, $d^1=\beta$, the generalized Bockstein homomorphism, and converging strongly to $(H_*(K,w;R)/\text{torsion})\otimes (R/pR)$.
\end{theorem}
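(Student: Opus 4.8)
The plan is to follow the proof of the classical Bockstein spectral sequence \cite[Theorem~10.3]{mccleary2001user}, replacing $\mathbb{Z}$ by the PID $R$ and using that for a prime element $p$ the quotient $R/pR$ is a field. First I would record the structural input: since $(K,w)$ is finite (or finite-type), each chain module $C_n(K,w)$ is a finitely generated free $R$-module, so each $H_n(K,w;R)$ is finitely generated over $R$. By the structure theorem for finitely generated modules over a PID, $H_n(K,w;R)\cong R^{b_n}\oplus T_n$ with $T_n$ a finite direct sum of cyclic torsion modules $R/q^sR$, $q$ prime in $R$. By the universal coefficient theorem, $E^1_n=H_n(K,w;R/pR)\cong \bigl(H_n(K,w;R)\otimes R/pR\bigr)\oplus \operatorname{Tor}^R_1(H_{n-1}(K,w;R),R/pR)$, and both functors annihilate every summand $R/q^sR$ with $q$ not an associate of $p$ (here one uses that $R/pR$ is a field, so $q^s$ acts invertibly); hence only the free part and the $p$-primary torsion of $H_*(K,w;R)$ contribute, and $E^1_n$ is a finite-dimensional $R/pR$-vector space.

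Next I would observe that the exact couple built from the long exact sequences in $H_*(-;R)$ and $H_*(-;R/pR)$ is functorial and additive in the chain complex, and that over a PID a finite-type chain complex of finitely generated free modules splits as a direct sum of \emph{elementary} complexes of the forms $0\to R\to 0$, $\ 0\to R\xrightarrow{\times q^s}R\to 0$ with $q$ prime and $s\geq 1$, and $0\to R\xrightarrow{\times u}R\to 0$ with $u$ a unit; this is the usual inductive Smith-normal-form argument applied to the weighted boundary maps. It then suffices to compute the generalized Bockstein spectral sequence on each elementary complex. The piece $0\to R\to 0$ contributes a single permanent cycle $R/pR$; the pieces $0\to R\xrightarrow{\times q^s}R\to 0$ with $q\not\sim p$ and $0\to R\xrightarrow{\times u}R\to 0$ contribute nothing to $E^1$; and $0\to R\xrightarrow{\times p^s}R\to 0$ contributes one $R/pR$ in each of two adjacent degrees joined by the differential $d^s$ (exactly the mechanism of Proposition \ref{prop:summand}), so these vanish on the $E^{s+1}$-page. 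Summing over the pieces gives $E^\infty_n\cong (R/pR)^{b_n}\cong (H_n(K,w;R)/\mathrm{torsion})\otimes R/pR$.

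Finally I would address convergence. In each fixed degree $n$ only finitely many $p$-primary summands appear, with bounded exponents, so there is an $N$ with $d^r=0$ into and out of degree $n$ for all $r>N$; hence $E^r_n=E^\infty_n$ for $r\gg 0$, and in this singly-graded, degreewise-finite setting strong convergence to $(H_*(K,w;R)/\mathrm{torsion})\otimes (R/pR)$ is immediate.

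The main obstacle is the reduction to elementary complexes (equivalently, to cyclic $R$-modules) together with the verification that the generalized Bockstein spectral sequence is compatible with the resulting direct-sum decomposition; once that is in place, the computation on each summand is routine and parallels Proposition \ref{prop:summand}. A secondary point requiring care is checking that the connecting homomorphism $\partial$ and the maps ${\mu_p}_*,{\rho_p}_*$ really do respect the decomposition, which is where functoriality and additivity of the exact-couple construction, and the fact that $R/pR$ is a field (so that every page is a graded vector space), are used.
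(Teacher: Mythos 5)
Your argument is correct and follows the same route the paper intends: the paper's proof is only the remark that one imitates \cite[Theorem~10.3]{mccleary2001user}, with the PID hypothesis invoked precisely so that the structure theorem for finitely generated modules applies, and your reduction via Smith normal form to elementary complexes $0\to R\to 0$ and $0\to R\xrightarrow{\times q^s}R\to 0$, together with the degreewise computation and stabilization, is exactly the standard way that imitation is carried out. You have in effect supplied the details that the paper delegates to the citation, so no substantive comparison is needed.
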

\begin{proof}
The proof is similar to \cite[Theorem 10.3]{mccleary2001user}. We need $R$ to be a PID in order to use the structure theorem for finitely generated modules over a PID.
\end{proof}

Consider the short exact sequence of coefficients
\begin{equation*}
0\to R/p^rR\to R/p^{2r}R\to R/p^rR\to 0,
\end{equation*}
where we have used the isomorphism $R/p^rR\cong p^rR/p^{2r}R$. We obtain the \emph{$r$-th order generalized Bockstein operator} as connecting homomorphism of the associated long exact sequence on homology. 

The generalized version of Theorem \ref{thm:identifiedimage} also holds:
\begin{theorem}
Let $R$ be an integral domain. Let $\{E^r, d^r\}$ denote the generalized Bockstein spectral sequence for weighted homology. $E_n^r$ is isomorphic to the subgroup of $H_n(K,w; R/p^rR)$ given by the image of $H_n(K,w; R/p^r R)\xrightarrow{\mu_{p^{r-1}}}H_n(K,w; R/p^rR)$ and $d^r: E_n^r\to E_{n-1}^r$ can be identified with the connecting homomorphism, the $r$-th order generalized Bockstein homomorphism.
\qed
\end{theorem}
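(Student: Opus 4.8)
The plan is to mirror McCleary's proof of the analogous statement for the classical Bockstein spectral sequence (\cite[Prop.~10.4]{mccleary2001user}), which is the argument already invoked for Theorem \ref{thm:identifiedimage}, and simply check that each step goes through with $\mathbb{Z}$ replaced by the integral domain $R$ and reduction mod $p^k$ replaced by the quotient map $R \to R/p^kR$. Throughout, $p$ is a prime element of $R$, so $p^kR$ is a proper ideal and the isomorphism $R/p^rR \cong p^rR/p^{2r}R$ used to form the short exact sequence $0 \to R/p^rR \to R/p^{2r}R \to R/p^rR \to 0$ is valid (multiplication by $p^r$ is injective on $R$ since $R$ is a domain).

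First I would recall the exact-couple derivation: the couple $(H_*(K,w;R), H_*(K,w;R/pR); {\mu_p}_*, {\rho_p}_*, \partial)$ produces derived couples, and by the standard theory the $r$-th derived term is $E^r_n = \partial^{-1}(\Ima(\mu_{p^{r-1}})_*) \big/ \rho_p(\ker(\mu_{p^{r-1}})_*)$, with differential $d^r$ induced by $\rho_p \circ (\mu_{p^{r-1}})_*^{-1} \circ \partial$. Then I would identify this term concretely on the chain level. A class in $E^r_n$ is represented by $[c \otimes 1] \in H_n(K,w;R/pR)$ with $c \in C_n(K,w)$ a cycle mod $p$, i.e. $\partial c = p c'$ for some $c' \in C_{n-1}(K,w)$; the condition $\partial[c\otimes 1] \in \Ima(\mu_{p^{r-1}})_*$ translates into the existence of a chain-level lift showing $\partial c \in p^r C_{n-1}(K,w)$ plus a cycle correction, which is exactly the condition that $c \otimes 1$ lifts to a cycle in $C_n(K,w) \otimes R/p^rR$. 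This is the heart of the matching between the abstract derived couple and the image of $\mu_{p^{r-1}} \colon H_n(K,w;R/p^rR) \to H_n(K,w;R/p^rR)$: one shows the reduction-of-coefficients map $C_n(K,w)\otimes R/p^rR \to C_n(K,w)\otimes R/pR$ sends cycles to representatives of $E^r_n$, and conversely every representative of $E^r_n$ so lifts; this uses freeness of $C_*(K,w)$ over $R$ so that $- \otimes R/p^kR$ behaves well and the universal coefficient short exact sequences split appropriately.

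Next I would identify $d^r$ with the $r$-th order generalized Bockstein. Given the short exact sequence $0 \to R/p^rR \xrightarrow{p^r} R/p^{2r}R \to R/p^rR \to 0$, its connecting homomorphism $\beta_r$ on $H_*(K,w; R/p^rR)$ sends $[c\otimes 1]$ (with $\partial c = p^r v$) to $[\tfrac{1}{p^r}\partial c \otimes 1] = [v \otimes 1]$, and one checks on representatives that under the isomorphism $E^r_n \cong \Ima\mu_{p^{r-1}}$ this is precisely the formula obtained from $\rho_p \circ (\mu_{p^{r-1}})_*^{-1}\circ\partial$ in the derived couple — both amount to dividing the boundary of a mod-$p^r$-cycle representative by $p^r$. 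Verifying that $\beta_r$ is well-defined independently of representative uses that $R$ is a domain (so division by $p^r$, when it exists, is unique).

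The main obstacle I anticipate is the careful bookkeeping in the chain-level identification: showing that the abstract subquotient $\partial^{-1}(\Ima\mu_{p^{r-1}}) / \rho_p(\ker\mu_{p^{r-1}})$ coincides on the nose with $\Ima(\mu_{p^{r-1}}\colon H_n(K,w;R/p^rR)\to H_n(K,w;R/p^rR))$, and that the differentials agree, requires unwinding several overlapping short exact sequences and tracking which lifts exist. In the classical case this is handled by an explicit diagram chase; here the only genuinely new input is that everything must be phrased over $R$ rather than $\mathbb{Z}$, which is harmless because $C_*(K,w)$ is $R$-free and $p$ is prime in the domain $R$, so $R/p^kR$ has no new pathologies relative to $\mathbb{Z}/p^k\mathbb{Z}$ for the purposes of this argument. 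I would close by remarking that strong convergence is not needed here (that is Theorem \ref{thm:genconv}'s job and requires $R$ a PID), so the present statement holds for an arbitrary integral domain $R$.
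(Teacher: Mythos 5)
Your proposal matches the paper's approach: the paper gives no written proof for this theorem, relying (as it does for Theorem~\ref{thm:identifiedimage}) on the assertion that the argument of \cite[Prop.~10.4]{mccleary2001user} carries over once $\mathbb{Z}$ is replaced by the integral domain $R$ and $p$ by a prime element, which is exactly the adaptation you carry out. Your additional observations --- that $R/p^rR\cong p^rR/p^{2r}R$ and unique divisibility by $p^r$ follow from $R$ being a domain, that freeness of $C_*(K,w)$ over $R$ is what makes the chain-level bookkeeping work, and that the PID hypothesis is only needed for the convergence statement in Theorem~\ref{thm:genconv} --- are correct and supply the details the paper leaves implicit.
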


We also have the following generalization of Proposition \ref{prop:summand}.

\begin{prop}
\label{thm:gencorr}
Let $(K,w)$ be a WSC of finite-type. There is a one-to-one correspondence between each summand $R/pR$ in $d^rE_{n+1}^r$, and each summand $R/p^rR$ in $H_n(K,w; R)$. In particular, there is a summand $R/p^rR$ in $H_*(K,w; R)$ if and only if the differential $d^r$ is nonzero.
\qed
\end{prop}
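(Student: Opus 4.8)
The plan is to transcribe the proof of Proposition \ref{prop:summand} almost verbatim, replacing $\Z$ by the ring $R$, $\Z/p\Z$ by $R/pR$, and $\Z/p^r\Z$ by $R/p^rR$, and to check that each step which used integer arithmetic still goes through. Two ring-theoretic observations make this work. First, $R$ being an integral domain means the nonzero element $p$, hence $p^r$, is a non-zero-divisor, so that $p^r x = 0$ forces $x = 0$; this is what lets us pass from $\partial_{n+1}(c) = p^r v$ to $v \in \ker\partial_n$, and what makes the ``order exactly $p^r$'' bookkeeping behave. Second, just as in Theorem \ref{thm:genconv}, I would take $R$ to be a PID so that the structure theorem applies and $H_n(K,w;R)$ has a well-defined primary decomposition into cyclic modules, making the phrase ``number of summands $R/p^rR$'' meaningful. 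As a bonus, since $p$ is prime in the PID $R$, the quotient $R/pR$ is a field, so every page $E^r_*$ is an $R/pR$-vector space and each $d^r$ is $R/pR$-linear; thus a ``summand $R/pR$ in $d^rE^r_{n+1}$'' is simply a basis vector, and the assertion becomes the equality of $\dim_{R/pR} d^rE^r_{n+1}$ with the number of $R/p^rR$-summands of $H_n(K,w;R)$.

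For the forward direction I would use the generalized form of Theorem \ref{thm:identifiedimage}: a nonzero element of $d^rE^r_{n+1}$ is of the form $d^r[c\otimes 1] = [\frac{1}{p^r}\partial_{n+1}c\otimes 1]$ with $[c\otimes 1]\in H_{n+1}(K,w;R/p^rR)$ lying in the image of $\mu_{p^{r-1}}$ and $(\partial_{n+1}\otimes 1)(c\otimes 1) = 0$, i.e.\ $\partial_{n+1}(c) = p^r v$ for some $v\in C_n(K,w)$. Then $\partial_n\partial_{n+1} = 0$ together with cancellation of $p^r$ give $\partial_n v = 0$, so $v\in\ker\partial_n$ and $p^r(v + \Ima\partial_{n+1}) = 0$ in $H_n(K,w;R)$. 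Picking representatives $[c_i\otimes 1]$ of a basis of $d^rE^r_{n+1}$ with associated $v_i$, one checks the $v_i$ are $R$-linearly independent modulo $\Ima\partial_{n+1}$ and that each $v_i + \Ima\partial_{n+1}$ has order exactly $p^r$ (survival of $[c_i\otimes 1]$ to the $E^r$-page rules out a strictly smaller order), so each contributes a distinct $R/p^rR$ summand of $H_n(K,w;R)$.

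Conversely, by the structure theorem each $R/p^rR$ summand of $H_n(K,w;R)$ is generated by a class $v_i' + \Ima\partial_{n+1}$ with $v_i'\in\ker\partial_n$ and $p^r v_i' = \partial_{n+1}(c_i')$ for some $c_i'\in C_{n+1}(K,w)$; since the $v_i'$ are $R$-linearly independent modulo $\Ima\partial_{n+1}$ and $p^r$ is a non-zero-divisor, the $c_i'$ are $R$-linearly independent modulo $\Ima\partial_{n+2}$, hence the classes $[c_i'\otimes 1]\in H_{n+1}(K,w;R/p^rR)$ lie in $\Ima\mu_{p^{r-1}}$, and $d^r[c_i'\otimes 1] = [\frac{1}{p^r}\partial_{n+1}c_i'\otimes 1] = [v_i'\otimes 1]$ is nonzero by the same bookkeeping, each spanning one basis vector of $d^rE^r_{n+1}$. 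Summing over all $n$ yields the ``in particular'' clause relating nonvanishing of $d^r$ to the presence of an $R/p^rR$ summand.

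The step I expect to be the real obstacle is not any single computation but making the correspondence an honest bijection rather than merely ``a summand exists iff a summand exists'': precisely, showing that the order of $v_i + \Ima\partial_{n+1}$ is exactly $p^r$, and that $R$-linear independence is transported faithfully in both directions. These are exactly the places where being a PID (so the primary decomposition, and hence the count of $R/p^rR$ summands, is unambiguous) and an integral domain (so $p^r$ cancels) are indispensable; over a general commutative ring the dimension count can break down.
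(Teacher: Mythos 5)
Your proposal is correct and follows essentially the approach the paper intends: the paper states Proposition \ref{thm:gencorr} without proof, leaving the reader to repeat the argument given for Proposition \ref{prop:summand} with $\Z$ replaced by $R$, which is exactly what you do. Your additional observations --- that the integral-domain hypothesis is what permits cancelling $p^r$ to conclude $v\in\ker\partial_n$, and that a PID hypothesis (as already assumed in Theorem \ref{thm:genconv}) is what makes the count of $R/p^rR$-summands and the ``order exactly $p^r$'' bookkeeping well defined --- address points the paper glosses over even in the integer case, so they strengthen rather than diverge from its argument.
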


%\subsection{Applications}
%In recent years, there has been some interest in studying algebraic geometry (or topics related to algebraic geometry) using simplicial complexes \cite{buchstaber2000torus,grbic2016homotopy}, or more specifically, weighted simplicial complexes \cite{doran2016simplicial,gonzalez2017balanced} with weights in a ring $R$. One example is \cite{doran2016simplicial}, where the authors use a simplicial approach to study effective divisors. It is hoped that by considering weighted homology and the generalized Bockstein spectral sequence over a ring $R$, it will shed light on some potential applications in algebraic geometry.

We show an example of the generalized Bockstein spectral sequence over the polynomial ring $\R[x]$.

\begin{eg}
Let $R$ be the polynomial ring $\R[x]$. Consider the WSC $(K,w)$, $w: K\to\mathbb{R}[x]$, as  shown in Figure \ref{fig:genbocksteineg}.
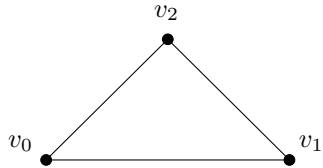
\begin{figure}[htbp]
\begin{center}
\begin{tikzpicture}[scale=0.8]
\draw (4.22,2.56) node[anchor=north west] {$v_0$};
\draw (6.62,4.72) node[anchor=north west] {$v_2$};
\draw (9.0,2.56) node[anchor=north west] {$v_1$};
\draw (5.,2.)-- (7.,4.);
\draw (9.,2.)-- (7.,4.);
\draw (5.,2.)-- (9.,2.);
\begin{scriptsize}
\draw [fill=black] (5.,2.) circle (2.5pt);
\draw [fill=black] (7.,4.) circle (2.5pt);
\draw [fill=black] (9.,2.) circle (2.5pt);
\end{scriptsize}
\end{tikzpicture}
\caption{WSC $(K,w)$ with the following weights: $w(v_0)=w(v_1)=w(v_2)=1$, $w([v_0,v_1])=w([v_1,v_2])=w([v_0,v_2])=x^2$.}
\label{fig:genbocksteineg}
\end{center}
\end{figure}

We first compute the generalized Bockstein spectral sequence for $p=x$. We obtain the following results.
\begin{align*}
E_n^1&=\begin{cases}
H_0(K,w; \R[x]/(x))\cong(\R[x]/(x))^3, &\text{for $n=0$}\\
H_1(K,w; \R[x]/(x))\cong\R[x]/(x), &\text{for $n=1$.}
\end{cases}\\
d^1&=0\\
E_n^2&\cong\begin{cases}
(\R[x]/(x))^3, &\text{for $n=0$}\\
(\R[x]/(x))^3, &\text{for $n=1.$}
\end{cases}\\
d^2E_1^2&\cong (\R[x]/(x))^2\\
E_n^3&\cong\begin{cases}
\R[x]/(x), &\text{for $n=0$}\\
\R[x]/(x), &\text{for $n=1$.}
\end{cases}\\
d^r&=0\quad\text{for $r\geq 3$}\\
E_n^\infty&\cong\begin{cases}
\R[x]/(x), &\text{for $n=0$}\\
\R[x]/(x), &\text{for $n=1$}.
\end{cases}
\end{align*}

By Theorem \ref{thm:genconv} and Proposition \ref{thm:gencorr}, we can recover the $\R[x]$-homology:
\begin{equation*}
H_n(K,w; \R[x])\cong\begin{cases}
\R[x]\oplus\R[x]/(x^2)\oplus\R[x]/(x^2), &\text{for $n=0$}\\
\R[x], &\text{for $n=1$}.
\end{cases}
\end{equation*}
\end{eg}
%%%%%Main Text End
\bibliographystyle{amsplain}
%\nocite{*}
\bibliography{jabref9}

\end{document}